\newtheorem{theorem}{Theorem}%[section]
\newtheorem{lemma}[theorem]{Lemma}
\theoremstyle{definition}
\theoremstyle{remark}
\newtheorem{remark}[theorem]{Remark}
\newcommand{\1}{\mathds{1}}
\renewcommand{\epsilon}{\varepsilon}
\newcommand{\N}{\mathbb{N}}
\renewcommand{\phi}{\varphi}
\newcommand{\R}{\mathbb{R}}
\newcommand{\Sph}{\mathbb{S}}
\DeclareMathOperator{\dist}{dist}
\DeclareMathOperator{\dom}{dom}
\DeclareMathOperator{\ran}{ran}
\DeclareMathOperator{\Tr}{Tr}
\newcommand{\limplus}{{\mathchoice{\vcenter{\hbox{$\scriptstyle +$}}}
  {\vcenter{\hbox{$\scriptstyle +$}}}
  {\vcenter{\hbox{$\scriptscriptstyle +$}}}
  {\vcenter{\hbox{$\scriptscriptstyle +$}}}
}}
\newcommand{\limminus}{{\mathchoice{\vcenter{\hbox{$\scriptstyle -$}}}
  {\vcenter{\hbox{$\scriptstyle -$}}}
  {\vcenter{\hbox{$\scriptscriptstyle -$}}}
  {\vcenter{\hbox{$\scriptscriptstyle -$}}}
}}
\newcommand{\limpm}{{\mathchoice{\vcenter{\hbox{$\scriptstyle \pm$}}}
  {\vcenter{\hbox{$\scriptstyle \pm$}}}
  {\vcenter{\hbox{$\scriptscriptstyle \pm$}}}
  {\vcenter{\hbox{$\scriptscriptstyle \pm$}}}
}}
\begin{document}

\title[Improved semiclassical eigenvalue estimates]{Improved semiclassical eigenvalue estimates\\ for the Laplacian and the Landau Hamiltonian}

\author[R. L. Frank]{Rupert L. Frank}
\address[Rupert L. Frank]{Mathe\-matisches Institut, Ludwig-Maximilians Universit\"at M\"unchen, The\-resienstr.~39, 80333 M\"unchen, Germany, and Munich Center for Quantum Science and Technology, Schel\-ling\-str.~4, 80799 M\"unchen, Germany, and Mathematics 253-37, Caltech, Pasa\-de\-na, CA 91125, USA}
\email{r.frank@lmu.de}

\author[S. Larson]{Simon Larson}
\address{\textnormal{(Simon Larson)} Mathematical Sciences, Chalmers University of Technology and the University of Gothenburg, SE-41296 Gothenburg, Sweden}
\email{larsons@chalmers.se}

\author[P. Pfeiffer]{Paul Pfeiffer}
\address[Paul Pfeiffer]{Mathe\-matisches Institut, Ludwig-Maximilians Universit\"at M\"unchen, The\-resienstr.~39, 80333 M\"unchen, Germany}
\email{pfeiffer@math.lmu.de}

\thanks{
\copyright\, 2025 by the authors. This paper may be reproduced, in its entirety, for non-commercial purposes.\\
	Partial support through US National Science Foundation grant DMS-1954995 (R.L.F.), the German Research Foundation through EXC-2111-390814868 (R.L.F.) and TRR 352-Project-ID 470903074 (R.L.F. \& P.P.), as well as the Swedish Research Council grant no.~2023-03985 (S.L.) is acknowledged.}

\begin{abstract}
    The Berezin--Li--Yau and the Kr\"oger inequalities show that Riesz means of order $\geq 1$ of the eigenvalues of the Laplacian on a domain $\Omega$ of finite measure are bounded in terms of their semiclassical limit expressions. We show that these inequalities can be improved by a multiplicative factor that depends only on the dimension and the product $\sqrt\Lambda |\Omega|^{1/d}$, where $\Lambda$ is the eigenvalue cut-off parameter in the definition of the Riesz mean. The same holds when $|\Omega|^{1/d}$ is replaced by a generalized inradius of $\Omega$. Finally, we show similar inequalities in two dimensions in the presence of a constant magnetic field.
\end{abstract}

\maketitle

%%%%%%%%%%%%%%%%%%%%%%%%%%%%%%%%%%%%

\section{Introduction and main results}

Let $\Omega\subset\R^d$ be an open set of finite measure. We denote by $-\Delta_\Omega^{\rm D}$ and $-\Delta_\Omega^{\rm N}$ the Dirichlet and Neumann realizations of $-\Delta$ in $L^2(\Omega)$, defined via quadratic forms; see \cite[Section 3.1]{FrankLaptevWeidl}. The famous Weyl asymptotics state that for every $\gamma\geq 0$ and $\sharp\in\{\rm D, N\}$, one has
\begin{equation}
	\label{eq:weyl}
	\Tr (-\Delta_\Omega^\sharp -\Lambda)_\limminus^\gamma \sim L_{\gamma,d}^{\rm sc} |\Omega| \Lambda^{\gamma+d/2}
	\qquad\text{as}\ \Lambda\to\infty \,.
\end{equation}
Here, and in what follows, $L_{\gamma,d}^{\rm sc}$ denotes the so-called semiclassical constant
\begin{equation*}
	L_{\gamma,d}^{\rm sc} = \frac{\Gamma(1+\gamma)}{(4\pi)^{d/2}\Gamma(1+\gamma+d/2)}\,,
\end{equation*}
and we use the notation $x_\limpm = \frac{1}{2}(|x|\pm x)$. For $\gamma=0$, $\Tr (-\Delta_\Omega^\sharp -\Lambda)_\limminus^\gamma$ is interpreted as the number of eigenvalues $<\Lambda$. In the Neumann case the validity of the asymptotics \eqref{eq:weyl} requires some mild conditions on $\Omega$. We refer to \cite[Sections 3.2 and 3.3]{FrankLaptevWeidl} for a proof and further background.

It is remarkable that for $\gamma\geq 1$ the asymptotics \eqref{eq:weyl} are accompanied by uniform inequalities
\begin{equation}
	\label{eq:blyk}
	\Tr (-\Delta_\Omega^{\rm D} -\Lambda)_\limminus^\gamma \leq L_{\gamma,d}^{\rm sc} |\Omega| \Lambda^{\gamma+d/2} \leq \Tr (-\Delta_\Omega^{\rm N} -\Lambda)_\limminus^\gamma
	\qquad\text{for all}\ \Lambda\geq 0 \,.
\end{equation}
This was shown by Berezin \cite{Berezin}, Li and Yau \cite{LiYau_83} and by Kr\"oger \cite{Kroger_92} (see also \cite{Laptev_97}), respectively. The famous P\'olya conjecture states that these uniform inequalities are valid for all $\gamma\geq 0$. 

There is substantial literature on improvements of the Berezin--Li--Yau and Kr\"oger inequalities, of which we cite \cite{Freericks_etal_02,Melas_03,Ueltschi_04,LiTang_06,Weidl,KoVuWe,GeLaWe,KoWe,LarsonPAMS,HarrellStubbe_18,LarsonJST,Harrell_etal_21,FrankLarson_Convex24}. In most of these works, the respective inequalities are improved under the assumption that $\Omega$ belongs to a restricted class of open sets of finite measure. Typical assumptions are that $\Omega$ has a finite moment of inertia, that it is bounded, or that it admits a Hardy inequality. However, in many applications it is an important aspect of the Berezin--Li--Yau and Kr\"oger inequalities that they are valid for \emph{any} open subset $\Omega \subset \R^d$ of finite measure (which is necessary to state the inequalities). In this paper our main result is that improved versions of the Berezin--Li--Yau and Kr\"oger inequalities hold under the same minimal assumption, i.e., that $\Omega$ is an open set of finite measure.

Closest to this study, both from a technical and a conceptual point of view, are the work \cite{LiTang_06} by Li and Tang (where $\Omega$ is assumed to be bounded) and the work \cite{FrankLarson_Convex24} by two of us (where $\Omega$ is assumed to have finite width). In particular, in the latter paper it is shown that there are constants $c,c'>0$ such that for every open set $\Omega\subset\R^d$ of finite measure and finite width, one has
\begin{align}\label{eq: width bound D}
	\Tr (-\Delta_\Omega^{\rm D} -\Lambda)_\limminus \leq L_{1,d}^{\rm sc} |\Omega| \Lambda^{1+d/2} \left( 1 - c e^{-c' \sqrt{\Lambda} w_\Omega} \right)
\end{align}
and
\begin{align}\label{eq: width bound N}
	\Tr (-\Delta_\Omega^{\rm N} -\Lambda)_\limminus \geq L_{1,d}^{\rm sc} |\Omega| \Lambda^{1+d/2} \left( 1 + c e^{-c' \sqrt{\Lambda} w_\Omega} \right),
\end{align}
where $w_\Omega:= \inf_{\omega\in\Sph^{d-1}} (\sup_{x\in\Omega} \omega\cdot x - \inf_{x\in\Omega} \omega\cdot x)$ is the width of $\Omega$.

Our goal in this paper is three-fold. First, we extend the improved bounds from \cite{FrankLarson_Convex24} to arbitrary open sets of finite measure, without any additional assumptions. Second, we will extend the improved bounds to Riesz exponents $\gamma>1$. Third, we will prove corresponding improved bounds in the presence of a constant magnetic field in two dimensions. 

Before stating our bounds let us comment on the relative improvement in our bounds in comparison to the Berezin--Li--Yau and Kr\"oger inequalities. As in~\eqref{eq: width bound D} and \eqref{eq: width bound N}, our relative remainder is exponentially small in the limit $\Lambda\to \infty$. This might appear rather disappointing as two-term semiclassical asymptotics suggest a relative improvement of order $1/\sqrt{\Lambda}$ in this regime (at least for sufficiently regular $\Omega$) and indeed some of the above mentioned improvements capture at least an inverse power-like dependence on $\Lambda$. However, both~\eqref{eq: width bound D} and \eqref{eq: width bound N} provide a substantial improvement in the intermediate spectral regime $\sqrt{\Lambda}w_\Omega \lesssim 1$ and a corresponding statement holds for the results proved in this paper. In fact, the main application of~\eqref{eq: width bound D} and \eqref{eq: width bound N} in \cite{FrankLarson_Convex24} was to show that the validity of stronger improvements of the Berezin--Li--Yau and Kr\"oger inequalities which had been proved in the regime of large $\Lambda$ (by using semiclassical techniques as, for example, in \cite{FrankLarson_Invent25}) could be extended to the entire range $\Lambda \geq 0$. We believe that the results obtained here can provide a useful tool in the regime when $\Omega$ has non-trivial geometry on the natural length-scale $1/\sqrt{\Lambda}$, in which case semiclassical techniques are not applicable.

We begin with the non-magnetic case, where we prove the following.

\begin{theorem}
	\label{thm: main Laplace measure}
	For any $d\geq 1$ there are constants $c,c'>0$ such that for any open set $\Omega\subset\R^d$ of finite measure, any $\Lambda>0$ and any $\gamma\geq 1$ one has
	$$
	\Tr(-\Delta_\Omega^{\rm D} - \Lambda)_\limminus^\gamma \leq L_{\gamma,d}^{\rm sc}\, |\Omega| \, \Lambda^{\gamma+d/2} \left(1 - c e^{-c' \sqrt{\Lambda} \lvert \Omega \rvert^{1/d}} \right)
	$$
	and
	$$
	\Tr(-\Delta_\Omega^{\rm N} - \Lambda)_\limminus^\gamma \geq L_{\gamma,d}^{\rm sc}\, |\Omega| \, \Lambda^{\gamma+d/2} \left(1 + c e^{-c' \sqrt{\Lambda} \lvert \Omega \rvert^{1/d}} \right).
	$$
\end{theorem}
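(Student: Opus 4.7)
The strategy has two main steps: an Aizenman--Lieb-type integration reduces the theorem to the case $\gamma=1$, and a quantitative refinement of the Berezin/Kr\"oger proof establishes that case. For the reduction I would start from the identity
\[
\Tr(-\Delta_\Omega^\sharp - \Lambda)_\limminus^\gamma = \gamma(\gamma-1)\int_0^\Lambda \Tr(-\Delta_\Omega^\sharp - \mu)_\limminus (\Lambda - \mu)^{\gamma-2}\,d\mu \qquad (\gamma > 1),
\]
valid for $\sharp\in\{{\rm D},{\rm N}\}$, and substitute the $\gamma=1$ bounds. A beta-function calculation shows the main terms integrate to $L_{\gamma,d}^{\rm sc}|\Omega|\Lambda^{\gamma+d/2}$, while the elementary bound $e^{-c'\sqrt\mu|\Omega|^{1/d}} \geq e^{-c'\sqrt\Lambda|\Omega|^{1/d}}$ throughout $\mu \in [0,\Lambda]$ lets the exponential factor come outside the integral. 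The exponential correction therefore propagates to level $\gamma$ with the same constants, uniformly in $\gamma\geq 1$.

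For the $\gamma=1$ bounds, the scaling $\Omega\mapsto|\Omega|^{-1/d}\Omega$ reduces us to $|\Omega|=1$. When $\sqrt\Lambda$ is below a dimension-dependent threshold the bounds are essentially automatic: Faber--Krahn gives $\lambda_1^{\rm D}(\Omega) \geq c_d$, so the Dirichlet trace vanishes, while on the Neumann side the zero eigenvalue alone contributes $\Lambda$, which dominates $L_{1,d}^{\rm sc}\Lambda^{1+d/2}$ by any fixed constant factor for small $\Lambda$. In the remaining regime I would revisit the Berezin/Li--Yau proof: extending the Dirichlet eigenfunctions $u_j$ by zero and applying Plancherel gives
\[
\sum_{\lambda_j < \Lambda}(\Lambda - \lambda_j) = L_{1,d}^{\rm sc}|\Omega|\Lambda^{1+d/2} - S(\Lambda),
\]
where the non-negative slack
\[
S(\Lambda) = \sum_{\lambda_j < \Lambda}\int_{|\xi|^2 > \Lambda}(|\xi|^2 - \Lambda)|\hat u_j(\xi)|^2\,d\xi + \int_{|\xi|^2 < \Lambda}(\Lambda - |\xi|^2)\sum_{\lambda_j \geq \Lambda}|\hat u_j(\xi)|^2\,d\xi
\]
collects the two places where the classical proof discards positive contributions. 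The task reduces to proving $S(\Lambda) \gtrsim L_{1,d}^{\rm sc}\Lambda^{1+d/2} e^{-c'\sqrt\Lambda}$; the Neumann case is handled in parallel, using plane waves $e^{i\xi\cdot x}$ on $\Omega$ as trial functions in Kr\"oger's variational principle.

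\textbf{Main obstacle.} The crux is the exponential lower bound on $S(\Lambda)$, uniform over all open sets of unit measure. This amounts to a quantitative Paley--Wiener-type statement: an $L^2$ function supported on a set of unit measure cannot have its Fourier transform sharply localized inside $\{|\xi|^2 < \Lambda\}$ once $\sqrt\Lambda$ is of moderate size. I would approach this by testing against Gaussian coherent states of spatial width $\sim 1/\sqrt\Lambda$, adapting the finite-width constructions of~\cite{FrankLarson_Convex24} so that the role of $w_\Omega$ is taken over by the intrinsic length scale $|\Omega|^{1/d}$; the guiding heuristic is that a positive fraction of such Gaussians centered at points of $\Omega$ have overlap of size $\gtrsim e^{-c'\sqrt\Lambda|\Omega|^{1/d}}$ with $\Omega^c$, which is precisely the quantity feeding into $S(\Lambda)$.
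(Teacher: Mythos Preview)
Your overall architecture matches the paper's: reduce to $\gamma=1$ via Aizenman--Lieb (the integration and the bound $e^{-c'\sqrt\mu\,|\Omega|^{1/d}}\geq e^{-c'\sqrt\Lambda\,|\Omega|^{1/d}}$ are exactly what the paper does), then in the Berezin/Kr\"oger argument isolate the nonnegative remainder that is usually discarded and bound it from below. Your slack $S(\Lambda)$ is precisely the sum $\mathcal R_<'+\mathcal R_>'$ in the paper's abstract identity, and like the paper you only need to extract a lower bound from the first piece.

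The genuine gap is your attack on the ``main obstacle''. What is actually required is the uncertainty bound: for any $\psi$ supported on a set of unit measure,
\[
\int_{|\xi|^2>\lambda}|\hat\psi(\xi)|^2\,d\xi \;\geq\; c\,e^{-c'\sqrt\lambda}\,\|\psi\|^2 \qquad\text{for all}\ \lambda>0.
\]
Your Gaussian heuristic does not produce this, and the asserted link between $S(\Lambda)$ and spatial ``overlap with $\Omega^c$'' is not the right one. A Gaussian of spatial width $1/\sqrt\Lambda$ centered at $x$ carries mass of order $e^{-c\Lambda\,\mathrm{dist}(x,\Omega^c)^2}$ in $\Omega^c$; for the unit ball with $x$ near the center this is $e^{-c\Lambda}$, not $e^{-c\sqrt\Lambda}$, and the set of $x\in\Omega$ with $\mathrm{dist}(x,\Omega^c)\lesssim\Lambda^{-1/4}$ has measure $\sim\Lambda^{-1/4}\to 0$, not a ``positive fraction''. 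The finite-width mechanism in \cite{FrankLarson_Convex24} is essentially one-dimensional (it exploits a thin direction) and does not transfer to an arbitrary set of finite measure by replacing $w_\Omega$ with $|\Omega|^{1/d}$.

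The paper fills this gap with a different, known tool: the Logvinenko--Sereda spectral inequality. Since $|\Omega|<\infty$, the complement $\Omega^c$ is $(\rho,1-\theta)$-thick for $\rho\lesssim|\Omega|^{1/d}$, and Logvinenko--Sereda gives $\|\1_{\Omega^c}g\|^2\geq c\,e^{-c'\rho\sqrt\lambda}\,\|g\|^2$ for every $g\in\ran\1(-\Delta\leq\lambda)$. A short projection argument (Cauchy--Schwarz plus Pythagoras with the orthogonal projections $P=\1(-\Delta>\lambda)$ and $Q=\1_\Omega$) converts this into the displayed Fourier-tail bound for each eigenfunction $\psi_n$. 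After that, your bookkeeping (integrate in $\lambda$ over $[\Lambda,4\Lambda]$, run a dichotomy on the counting function, then Aizenman--Lieb) goes through. Your small-$\Lambda$ split via Faber--Krahn becomes unnecessary once the uncertainty bound is available for all $\lambda$, though it does no harm.
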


In fact, this theorem is a consequence of a more general result, Theorem \ref{thm: main Laplace inradius}, which we describe later in this introduction.

Let us turn to the case of a constant magnetic field in dimension $d=2$. Let $B>0$ and $A(x) := \frac{B}{2}\bigl(\begin{smallmatrix}x_2\\ -x_1\end{smallmatrix}\bigr)$ for $x\in\R^2$. Let $\Omega\subset\R^2$ be an open set and let $H_\Omega^{\rm D}$ and $H_\Omega^{\rm N}$ be the Dirichlet and Neumann realizations in $L^2(\Omega)$ of the Landau Hamiltonian 
$$
(-i\partial_{x_1} +A_1(x) )^2 + (-i\partial_{x_2} +A_2(x))^2 \,.
$$
Specifically, these operators are defined via the quadratic form (see \cite[Section 3.1]{FrankLaptevWeidl})
\begin{equation*}
 	u \mapsto \int_\Omega |\nabla u(x) +iA(x)u(x)|^2\,dx \,.
\end{equation*}
with form domains $H^1_{A,0}(\Omega)$ and $H^1_A(\Omega)$, respectively. Here $H^1_A(\Omega)$ consists of all functions in $L^2(\Omega)$ such that the distribution $(\nabla+iA)u$ is square-integrable in $\Omega$, and $H^1_{A,0}(\Omega)$ is the closure of $C_c^\infty(\Omega)$ in $H^1_A(\Omega)$.

We are interested in bounds on
$$
\Tr (H_\Omega^\sharp - \Lambda)_\limminus^\gamma
$$
that correspond to the above-mentioned strengthened versions of the Berezin--Li--Yau and Kr\"oger inequalities for the Laplace operator without magnetic field. The magnetic Berezin--Li--Yau and Kr\"oger inequalities were shown by Erd\H{o}s--Loss--Vougalter \cite{ErdosLossVougalter} and the first author~\cite{Frank_Semigroup}, respectively, and state that
$$
\Tr(H_\Omega^{\rm D} - \Lambda)_\limminus^\gamma \leq |\Omega| \, \frac{B}{2\pi} \sum_{k=1}^\infty (B(2k-1)-\Lambda)_\limminus^\gamma
\qquad\text{for all}\ \Lambda>0
$$
and
$$
\Tr(H_\Omega^{\rm N} - \Lambda)_\limminus^\gamma \geq |\Omega| \, \frac{B}{2\pi} \sum_{k=1}^\infty (B(2k-1)-\Lambda)_\limminus^\gamma
\qquad\text{for all}\ \Lambda>0 \,. 
$$
Note that here, instead of the quantity $L_{\gamma,2}^{\rm sc} \Lambda^{\gamma+1}$, the quantity $\frac{B}{2\pi} \sum_{k=1}^\infty (B(2k-1)-\Lambda)_\limminus^\gamma$ appears. This is crucial in applications where both $\Lambda$ and $B$ are considered as asymptotic parameters; see, for instance, \cite{CdV,LiSoYn1,LiSoYn2,So,FrLoWe,Fo}.

Our second main result is a strengthened version of the magnetic Berezin--Li--Yau and Kr\"oger inequalities.

\begin{theorem}
	\label{thm: main Landau measure}
	There are constants $c,c'>0$ such that for any open set $\Omega\subset R^2$ of finite measure, any $B>0$, any $\Lambda>0$ and any $\gamma\geq 1$ one has
	$$
	\Tr(H_\Omega^{\rm D} - \Lambda)_\limminus^\gamma \leq |\Omega| \, \frac{B}{2\pi} \sum_{k=1}^\infty (B(2k-1)-\Lambda)_\limminus^\gamma \left(1 - c e^{-c'(\sqrt{\Lambda \lvert \Omega \rvert} + B \lvert \Omega \rvert)} \right)
	$$
	and
	$$
	\Tr(H_\Omega^{\rm N} - \Lambda)_\limminus^\gamma \geq |\Omega| \, \frac{B}{2\pi} \sum_{k=1}^\infty (B(2k-1)-\Lambda)_\limminus^\gamma \left(1 + c e^{-c'(\sqrt{\Lambda \lvert \Omega \rvert} + B \lvert \Omega \rvert)} \right).
	$$
\end{theorem}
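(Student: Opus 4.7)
The plan is to adapt the argument behind Theorem~\ref{thm: main Laplace measure} (and its underlying inradius version, Theorem~\ref{thm: main Laplace inradius}) to the magnetic setting, substituting magnetic coherent states for the ordinary coherent states of $-\Delta$. The main new feature is that the Landau Hamiltonian carries its own length scale $B^{-1/2}$, the magnetic length, in addition to the wavelength $\Lambda^{-1/2}$, and the proof has to handle both uniformly in the ratio $\Lambda/B$. The additive form $\sqrt{\Lambda |\Omega|} + B|\Omega|$ in the exponent is precisely what emerges when neither scale is allowed to dominate.

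For the Dirichlet bound, the starting point is the identity that yields the Erd\H{o}s--Loss--Vougalter inequality: denote by $\Pi_k$ the projection onto the $k$-th Landau level of the free Landau Hamiltonian $H_{\R^2}$, which has a constant diagonal $\Pi_k(x,x) = B/(2\pi)$ and off-diagonal decay on scale $B^{-1/2}$. One writes
\[
\Tr (H_\Omega^{\rm D} - \Lambda)_\limminus^\gamma \leq \sum_{k\geq 1} (B(2k-1) - \Lambda)_\limminus^\gamma \, \Tr(\1_\Omega \Pi_k \1_\Omega),
\]
and the usual bound $\Tr(\1_\Omega \Pi_k \1_\Omega) \leq (B/2\pi)|\Omega|$ is sharp only in the semiclassical limit. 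I would extract a quantitative loss in this step by analysing the spatial convolution kernel of $\Pi_k$: for $x$ within a boundary layer of width $\lesssim \max(B^{-1/2},\Lambda^{-1/2})$ of $\partial \Omega$, the local density $(\1_\Omega \Pi_k \1_\Omega)(x,x)$ is bounded away from $B/(2\pi)$ by a definite fraction, uniformly over the Landau levels with $B(2k-1) < \Lambda$. For the Neumann case I would construct magnetic Kr\"oger-type trial families, namely magnetic coherent states used as test vectors in $H^1_A(\Omega)$, and establish a matching boundary enhancement.

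The global improvement is then obtained by integrating the pointwise loss against an isoperimetric lower bound on the measure of the boundary layer, as in the proof of Theorem~\ref{thm: main Laplace inradius}. The $\sqrt{\Lambda|\Omega|}$ contribution to the exponent comes from the wavelength layer (as in the non-magnetic case), and the $B|\Omega|$ contribution from the magnetic-length layer; these combine additively because one can always choose whichever layer gives the stronger gain. The hardest part will be establishing the local loss estimate uniformly across the regime $\Lambda \sim B$, where the two length scales become comparable: here the magnetic coherent states are neither well-localized in position (as in $\Lambda \gg B$) nor purely oscillator-like (as in $B \gg \Lambda$), and a unified analysis via the explicit Fock-space structure of $\Pi_k$ in magnetic translation variables will be required. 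Once the local loss is established uniformly, the covering/scaling machinery from the non-magnetic proof should apply without essential change.
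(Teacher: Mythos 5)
Your proposal contains a genuine gap at its central step. You propose to extract the improvement from the inequality $\Tr(\1_\Omega \Pi_k \1_\Omega) \leq \frac{B}{2\pi}|\Omega|$ by showing that the diagonal $(\1_\Omega \Pi_k \1_\Omega)(x,x)$ drops below $B/(2\pi)$ in a boundary layer. But this quantity is an exact identity, not an inequality with slack: the kernel of $\1_\Omega \Pi_k \1_\Omega$ is $\1_\Omega(x)\Pi_k(x,y)\1_\Omega(y)$, whose diagonal equals $B/(2\pi)$ at \emph{every} $x\in\Omega$, so $\Tr(\1_\Omega \Pi_k \1_\Omega) = \int_\Omega \Pi_k(x,x)\,dx = \frac{B}{2\pi}|\Omega|$ with no loss whatsoever (the paper uses exactly this identity to compute the main term). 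The actual loss in the Erd\H{o}s--Loss--Vougalter argument sits in the preceding convexity step, i.e.\ in the remainder terms $\mathcal R_<'$ and $\mathcal R_>'$ of Lemma~\ref{lem: abstract identities}: it measures how much the zero-extensions of the Dirichlet eigenfunctions leak into the Landau levels above $\Lambda$. Lower-bounding that leakage requires a quantitative uncertainty principle --- a function supported on a set of finite measure cannot be concentrated in $\ran\1(H\leq\Lambda)$ --- and the paper obtains it from the magnetic Logvinenko--Sereda/spectral inequality of Pfeiffer--T\"aufer applied to the thick complement $\Omega^\complement$. This external input, or something playing its role, is missing from your outline, and without it the proposal does not produce a positive lower bound on the remainder.

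A secondary but also serious issue is the boundary-layer/isoperimetric framing. The theorem is claimed for arbitrary open sets of finite measure, where $\partial\Omega$ may have positive (even full) measure and infinite perimeter, so ``integrating the pointwise loss against an isoperimetric lower bound on the measure of the boundary layer'' is not a well-defined strategy in this generality; this also misdescribes the non-magnetic proof, which likewise proceeds via the spectral inequality on thick sets (with the regularized inradius $\rho_\theta(\Omega)$, controlled by $|\Omega|^{1/2}$) rather than via any boundary-layer analysis. Your heuristic for the additive exponent $\sqrt{\Lambda|\Omega|}+B|\Omega|$ as coming from two competing length scales is reasonable, and the reduction of $\gamma>1$ to $\gamma=1$ (which the paper does via Aizenman--Lieb) is unaddressed but routine; the core difficulty, however, is the quantitative uncertainty principle, which your argument does not supply.
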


Again, this theorem is a consequence of a more general theorem that we describe momentarily.

These promised stronger results are stated in terms of a `regularized inradius'. Define for $\theta\in (0, 1)$ the regularized inradius of a measurable set $\Omega \subset \R^d$ by
\begin{equation}
	\rho_\theta(\Omega) := \inf\biggl\{\rho>0: \sup_{x\in \R^d} \frac{|\Omega \cap B_\rho(x)|}{|B_\rho(x)|}\leq \theta\biggr\}\,.
\end{equation}
Here, $B_\rho(x)$ denotes the open ball of radius $\rho$ centered at $x\in\R^d$. The relevance of the regularized inradius $\rho_\theta(\Omega)$ was emphasized by Lieb \cite{Lieb} in his lower bound on the smallest Dirichlet eigenvalue of the Laplacian. 

\begin{remark}\label{inradiusremarks}
	Let us summarize some facts about $\rho_\theta(\Omega)$.
	\begin{enumerate}
		\item[(a)] For any $\theta \in (0, 1)$ we have the bound
		$$
		\rho_\theta(\Omega) \leq \Bigl(\frac{|\Omega|}{\theta |B_1|}\Bigr)^{1/d}
		$$
		Indeed, for any $x \in \R^d$ and $\rho \geq \rho_0 := (\frac{|\Omega|}{\theta|B_1|})^{1/d}$ it holds that
		\begin{equation*}
			|\Omega \cap B_\rho(x)| \leq |\Omega| = \theta|B_{\rho_0}(x)| \leq \theta |B_\rho(x)|\,.
		\end{equation*}
		
		\item[(b)] We claim that for any $d\in\N$ and any $\theta\in(0,1)$ there is a $C$ such that for any set $\Omega\subset\R^d$ of finite width one has
		$$
		\rho_\theta(\Omega)\leq C\, w_\Omega
		$$
		Indeed, for any $\rho> w_\Omega/2, x\in \R^d$ it holds that
		\begin{align*}
			|\Omega \cap B_\rho(x)| 
			&\leq 
			\sup_{z \in \R^d}|\{y=(y_1, \ldots, y_d) \in \R^d: |y_d|<w_\Omega/2\}\cap B_\rho(z)| \\
			&= 
			|B_\rho(x)|\sup_{z \in \R^d}\frac{|\{y=(y_1, \ldots, y_d) \in \R^d: |y_d|<\frac{w_\Omega}{2\rho}\}\cap B_1(z)|}{|B_1|}\,.
		\end{align*}
		Since the supremum in the right-hand side tends to $0$ as $w_\Omega/(2\rho) \to 0$, for any given $\theta>0$ there is a $C$ such that it is $\leq\theta$ when $w_\Omega/(2\rho)\leq (2C)^{-1}$.
		
		\item[(c)] If $\rho_{\rm in}(\Omega):= \sup_{x\in\Omega} \dist(x,\partial\Omega)$ denotes the inradius of $\Omega$, then
        $$
        \rho_\theta(\Omega) \geq \rho_{\rm in}(\Omega)
        \qquad\text{for all}\ \theta\in(0,1) \,.
        $$
        This follows from the fact that $\sup_{x\in\R^d} |\Omega\cap B_\rho(x)|/|B_\rho(x)| = 1$ for $\rho\in(0,\rho_{\rm in}]$.
        
        \item[(d)] From the Lebesgue differentiation theorem $\lim_{\rho \to 0} \frac{|\Omega \cap B_\rho(x)|}{|B_\rho(x)|} = \1_\Omega(x)$ for almost every $x\in \R^d$. Therefore, $\rho_\theta(\Omega)=0$ if and only if $|\Omega|=0$.
		
		\item[(e)] If $\rho_\theta(\Omega)<\infty$, then for all $x \in \R^d$ it holds that
		$$
		|\Omega \cap B_{\rho_\theta(\Omega)}(x)| \leq \theta |B_{\rho_\theta(\Omega)}(x)|\,.
		$$ 
		Indeed, there is a sequence $\{\rho_k\}_{k\geq 1} \subset [\rho_\theta(\Omega), \infty)$ such that $\lim_{k\to \infty}\rho_k=\rho_\theta(\Omega)$ and $|\Omega \cap B_{\rho}(x)| \leq \theta |B_{\rho}(x)|$ for each $k\geq 1$ and all $x \in \R^d$. Since $\Omega \cap B_{\rho_\theta(\Omega)}(x)\subseteq \Omega \cap B_{\rho_k}(x)$ we deduce that
		\begin{equation}\label{eq: density estimate rho}
			|\Omega \cap B_{\rho_\theta(\Omega)}(x)| \leq |\Omega \cap B_{\rho_k}(x)| \leq \theta |B_{\rho_k}(x)| \quad \mbox{for all } k\geq 1, x\in \R^d\,.
		\end{equation}
		The claimed inequality follows by taking the limit $k \to \infty$. Moreover, for every $\epsilon \in (0, 1]$ there is an $x_\epsilon \in \R^d$ such that
        $$
            |\Omega \cap B_{\rho_\theta(\Omega)}(x_\epsilon)|\geq \theta(1-\epsilon)|B_{\rho_\theta(\Omega)}(x_\epsilon)|\,.
        $$
        Indeed, for each $\rho<\rho_\theta(\Omega)$ there is a $x'_\rho\in \R^d$ so that $|\Omega \cap B_{\rho}(x'_\rho)|\geq \theta|B_{\rho}(x'_\rho)|$, by inclusion and choosing $\rho \in [(1-\epsilon)^{1/d}\rho_\theta(\Omega), \rho_\theta(\Omega))$ we find
        $$
            |\Omega \cap B_{\rho_\theta(\Omega)}(x'_\rho)| \geq |\Omega \cap B_{\rho}(x'_\rho)| \geq \theta|B_\rho(x'_\rho)|\geq \theta(1-\epsilon)|B_{\rho_\theta(\Omega)}(x'_\rho)|
        $$
        which is the claimed inequality with $x_\epsilon=x'_\rho$. If $|\Omega|<\infty$ one can extend the above inequality to $\epsilon=0$ by arguing that non-compactness of the set $\{x_\epsilon\}_{\epsilon\in (0, 1)}$ would contradict the finiteness of $|\Omega|$.
	\end{enumerate}
\end{remark}

We now state the stronger versions of Theorems \ref{thm: main Laplace measure} and \ref{thm: main Landau measure}.

\begin{theorem}
    \label{thm: main Laplace inradius}
    For any $d\geq 1$ and $\theta\in(0,1)$ there are constants $c,c'>0$ such that for any open set $\Omega\subset\R^d$ of finite measure, any $\Lambda>0$ and any $\gamma\geq 1$ one has
    $$
	\Tr(-\Delta_\Omega^{\rm D} - \Lambda)_\limminus^\gamma \leq L_{\gamma,d}^{\rm sc}\, |\Omega| \, \Lambda^{\gamma+d/2} \left(1 - c e^{-c' \sqrt{\Lambda} \rho_\theta(\Omega)} \right)
	$$
	and
	$$
	\Tr(-\Delta_\Omega^{\rm N} - \Lambda)_\limminus^\gamma \geq L_{\gamma,d}^{\rm sc}\, |\Omega| \, \Lambda^{\gamma+d/2} \left(1 + c e^{-c' \sqrt{\Lambda} \rho_\theta(\Omega)} \right).
	$$
\end{theorem}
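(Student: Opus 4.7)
My strategy is to reduce via an Aizenman--Lieb lifting to the case $\gamma = 1$, and then prove that base case by a refinement of the Berezin--Li--Yau and Kr\"oger arguments that extracts a quantitative gap from the regularized inradius $\rho_\theta(\Omega)$.

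For the lift, I use the identity
\begin{equation*}
(\Lambda - x)_\limplus^\gamma = \frac{\Gamma(\gamma+1)}{\Gamma(\gamma-1)\Gamma(2)}\int_0^\Lambda t^{\gamma-2}(\Lambda - t - x)_\limplus\, dt \qquad (\gamma>1),
\end{equation*}
applied to the spectrum of $-\Delta_\Omega^\sharp$, with the (yet to be proven) $\gamma = 1$ inequality substituted inside. A Beta-function computation confirms that the main term reproduces exactly $L_{\gamma,d}^{\rm sc}|\Omega|\Lambda^{\gamma+d/2}$. After the change of variables $t = \Lambda(1-s)$, the correction term in the Dirichlet case becomes
\begin{equation*}
-c\, L_{1,d}^{\rm sc}|\Omega|\, \Lambda^{\gamma+d/2}\,\frac{\Gamma(\gamma+1)}{\Gamma(\gamma-1)\Gamma(2)}\int_0^1 (1-s)^{\gamma-2} s^{1+d/2}\, e^{-c'\sqrt{\Lambda s}\,\rho_\theta(\Omega)}\, ds.
\end{equation*}
Restricting to $s \in [1/2,1]$, where $s^{1+d/2}$ is bounded below, $e^{-c'\sqrt{\Lambda s}\,\rho_\theta(\Omega)} \geq e^{-c'\sqrt{\Lambda}\,\rho_\theta(\Omega)}$, and $(1-s)^{\gamma-2}$ remains integrable near $s=1$ since $\gamma>1$, I obtain a lower bound on this integral of order $e^{-c'\sqrt{\Lambda}\,\rho_\theta(\Omega)}$, which yields a correction of the desired form $-c''\, L_{\gamma,d}^{\rm sc}|\Omega|\Lambda^{\gamma+d/2}\,e^{-c'\sqrt{\Lambda}\,\rho_\theta(\Omega)}$. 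The Neumann side follows by the identical computation with inequalities reversed.

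The remaining task is the base case $\gamma = 1$. For the Dirichlet bound, I would rewrite the classical Berezin identity
\begin{equation*}
\Tr(-\Delta^{\rm D}_\Omega - \Lambda)_\limminus = \int_{\R^d}(\Lambda - |\xi|^2)\,\rho(\xi)\, d\xi,\qquad \rho(\xi) = \frac{1}{(2\pi)^d}\sum_{\lambda_n < \Lambda}|\hat{\tilde\psi}_n(\xi)|^2,
\end{equation*}
where $\tilde\psi_n$ denotes the extension by zero of the $n$-th Dirichlet eigenfunction. The classical Berezin bound $\rho(\xi) \leq |\Omega|/(2\pi)^d$ recovers the sharp constant but no more; to improve it I must produce a pointwise gap $|\Omega|/(2\pi)^d - \rho(\xi) \gtrsim |\Omega|\,e^{-c'\sqrt{\Lambda}\,\rho_\theta(\Omega)}/(2\pi)^d$ on a set of $\xi$ carrying a positive fraction of $\int(\Lambda - |\xi|^2)_\limplus d\xi$. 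Since every ball of radius $\rho_\theta(\Omega)$ in $\R^d$ meets $\Omega$ in density at most $\theta$, a Vitali-type covering supplies, at every point of $\R^d$, a "forbidden" ball of this radius. I would then construct semiclassical wave packets at scale $\Lambda^{-1/2}$ supported on these forbidden balls that are almost orthogonal to the low-energy Dirichlet subspace, and compare their overlap with the plane wave $\1_\Omega e^{-i\xi\cdot x}$ to produce the gap. The Neumann inequality is dual: one enlarges the Kr\"oger family of plane waves by adding analogous $H^1(\Omega)$-wave packets localized in the low-density balls, which are admissible as Neumann test functions and beat the Kr\"oger lower bound by the same factor.

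The hard part is obtaining the exponential decay rate $e^{-c'\sqrt{\Lambda}\,\rho_\theta(\Omega)}$ in the gap estimate. The natural semiclassical length $\Lambda^{-1/2}$ must be matched against the geometric scale $\rho_\theta(\Omega)$, and the rate is dictated by the exponential (rather than merely polynomial) suppression of low-energy Dirichlet eigenfunctions across a ball in which $\Omega$ has density at most $\theta$. I anticipate this will require an Agmon-type elliptic decay estimate for the spectral projection onto $\{\lambda_n < \Lambda\}$ across such a ball, applied to carefully chosen frequency-localized test functions, combined with a locally finite covering argument to sum the contributions coherently. This scale-matching and the extraction of the sharp exponential rate is the technical core of the proof.
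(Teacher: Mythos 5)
The decisive gap is in the base case $\gamma=1$, which you only sketch. Both your Dirichlet route (a pointwise gap $|\Omega|/(2\pi)^d-\rho(\xi)\gtrsim |\Omega|e^{-c'\sqrt{\Lambda}\rho_\theta(\Omega)}/(2\pi)^d$) and your Neumann route (enlarging Kr\"oger's trial family) hinge on one quantitative fact that you defer to "Agmon-type elliptic decay estimates" and wave packets on forbidden balls, and this is precisely where the argument would fail: Agmon estimates describe tunneling through classically forbidden regions of a potential and give nothing for spectral projections of the free Laplacian on sets of low density. The fact you actually need is a quantitative uncertainty principle: if $\psi$ is supported in $\Omega$ and normalized, then $\int_{|\xi|^2>\Lambda}|\hat\psi(\xi)|^2\,d\xi\ge c\,e^{-c'\rho_\theta(\Omega)\sqrt\Lambda}$. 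This is (equivalent to) the Logvinenko--Sereda/Kacnel'son spectral inequality for the thick set $\Omega^\complement$ (note $\Omega^\complement$ is $(\rho_\theta(\Omega),1-\theta)$-thick), a genuinely nontrivial theorem that the paper imports as known and converts into the high-frequency lower bound via an elementary two-projection lemma. Your proposal neither invokes this result nor supplies a working substitute, so the exponential rate --- which you yourself identify as "the technical core" --- is missing. The paper then feeds this lower bound not into a pointwise improvement of $\rho(\xi)$ but into the \emph{dropped remainder} of the Berezin identity, $\mathcal R_<'=\sum_{\lambda_n<\Lambda}\int(|\xi|^2-\Lambda)_\limplus|\hat\psi_n(\xi)|^2d\xi$ (and the analogous $\mathcal R_<$ in Kr\"oger's identity for Neumann), followed by a case distinction on the size of the counting function; this is a simpler and workable route once the uncertainty principle is in hand.

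A second, smaller problem: your Aizenman--Lieb lift does not yield constants uniform in $\gamma$, as the theorem requires. Restricting the $s$-integral to $[1/2,1]$ retains only
\begin{equation*}
\int_{1/2}^{1}(1-s)^{\gamma-2}\,ds=\frac{2^{-(\gamma-1)}}{\gamma-1}
\end{equation*}
out of $B(\gamma-1,2+d/2)=\Gamma(\gamma-1)\Gamma(2+d/2)/\Gamma(\gamma+1+d/2)$, so the correction you obtain, measured against $L^{\rm sc}_{\gamma,d}|\Omega|\Lambda^{\gamma+d/2}$, carries a prefactor of order $\gamma^{1+d/2}2^{-\gamma}$, which tends to $0$ as $\gamma\to\infty$. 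The fix is to use $e^{-c'\sqrt{\Lambda s}\,\rho_\theta(\Omega)}\ge e^{-c'\sqrt{\Lambda}\,\rho_\theta(\Omega)}$ on \emph{all} of $[0,1]$ and keep the full Beta integral, which reproduces $L^{\rm sc}_{\gamma,d}/L^{\rm sc}_{1,d}$ exactly; this is how the paper argues in Section \ref{sec:gamma>}.
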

        
Clearly, in view of Remark \ref{inradiusremarks} (a), Theorem \ref{thm: main Laplace measure} is an immediate consequence of Theorem \ref{thm: main Laplace inradius}, taking any fixed value of $\theta$. Similarly, the improved bound in \cite[Proposition 2.1]{FrankLarson_Convex24} is a consequence of Theorem \ref{thm: main Laplace measure} and Remark \ref{inradiusremarks} (b), except for the fact that in \cite{FrankLarson_Convex24} we also proved that the constants $c,c'$ can be chosen independently of $d$.

In the magnetic case we have the corresponding result.

\begin{theorem}
	\label{thm: main Landau inradius}
	For any $\theta \in (0, 1)$ there are constants $c,c'>0$ such that for any open set $\Omega\subset\R^2$ with $|\Omega|<\infty$, any $B>0$, any $\Lambda>0$ and any $\gamma\geq 1$ one has
	$$
	\Tr(H_\Omega^{\rm D} - \Lambda)_\limminus^\gamma \leq |\Omega| \, \frac{B}{2\pi} \sum_{k=1}^\infty (B(2k-1)-\Lambda)_\limminus^\gamma \left(1 - c e^{-c'(\rho_\theta(\Omega)\sqrt{\Lambda} + \rho_\theta(\Omega)^2B )} \right)
	$$
	and
	$$
	\Tr(H_\Omega^{\rm N} - \Lambda)_\limminus^\gamma \geq |\Omega| \, \frac{B}{2\pi} \sum_{k=1}^\infty (B(2k-1)-\Lambda)_\limminus^\gamma \left(1 + c e^{-c'(\rho_\theta(\Omega)\sqrt{\Lambda} + \rho_\theta(\Omega)^2B )} \right).
	$$
\end{theorem}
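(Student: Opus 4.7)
The plan is to mirror the strategy used for the non-magnetic analogue (Theorem \ref{thm: main Laplace inradius}), adapted to the Landau Hamiltonian, in three steps.

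\emph{Step 1: Reduction to $\gamma = 1$.} By the Aizenman--Lieb identity
$$
(\Lambda - E)_+^\gamma = \gamma(\gamma-1)\int_0^\Lambda (\Lambda - u - E)_+\, u^{\gamma-2}\, du \qquad (\gamma > 1),
$$
applied term by term to each eigenvalue $E = \lambda_n$ of $H_\Omega^\sharp$ and to each Landau level $E = B(2k-1)$, both sides of the desired inequality at exponent $\gamma$ are expressed as weighted $u$-integrals over $[0, \Lambda]$ of the corresponding $\gamma = 1$ quantities evaluated at $\Lambda' := \Lambda - u \in [0, \Lambda]$. Since $\sqrt{\Lambda'} \le \sqrt{\Lambda}$, the Dirichlet improvement factor $1 - c\,e^{-c'(\sqrt{\Lambda'}\rho_\theta(\Omega) + B\rho_\theta(\Omega)^2)}$ at $\Lambda'$ is bounded above by the same factor at $\Lambda$, and likewise the Neumann factor $1 + c\,e^{-c'(\sqrt{\Lambda'}\rho_\theta(\Omega) + B\rho_\theta(\Omega)^2)}$ at $\Lambda'$ is bounded below by the corresponding factor at $\Lambda$. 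Thus the $\gamma = 1$ case transfers cleanly to all $\gamma > 1$ in the advertised form.

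\emph{Step 2: The case $\gamma = 1$ via the magnetic heat semigroup.} The Erd\H{o}s--Loss--Vougalter and Frank inequalities both proceed from the pointwise/heat-trace comparisons
$$
\Tr e^{-tH_\Omega^{\rm D}} \le |\Omega|\,\tfrac{B}{4\pi \sinh(tB)} \le \Tr e^{-tH_\Omega^{\rm N}},
$$
combined with the identity $\tfrac{B}{2\pi}\sum_k e^{-tB(2k-1)} = \tfrac{B}{4\pi \sinh(tB)}$ and a Mellin/Laplace transform converting heat bounds into Riesz-mean bounds. To improve the Dirichlet side I would use the magnetic Feynman--Kac--It\^o representation expressing the heat kernel diagonal as a Brownian-bridge average of a magnetic phase factor times $\1_{\tau_\Omega > t}$. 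The regularized inradius hypothesis ensures that for every $x$ the ball $B_{\rho_\theta(\Omega)}(x)$ contains mass at least $(1-\theta)|B_{\rho_\theta(\Omega)}|$ in $\Omega^c$, which via standard Brownian-bridge exit estimates yields a uniform bound $\mathbb{P}_{x\to x}^t[\tau_\Omega > t] \le 1 - c\,e^{-c'\rho_\theta(\Omega)^2/t}$. Combined with the magnetic factor $\tfrac{B}{4\pi \sinh(tB)}$ and Mellin-inverting with the time parameter chosen to balance the semiclassical scale $t \sim 1/\Lambda$ against the magnetic scale $t \sim 1/B$, this produces an improvement factor of the form $1 - c\,e^{-c'(\sqrt{\Lambda}\rho_\theta(\Omega) + B\rho_\theta(\Omega)^2)}$. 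The Neumann case is then obtained by a dual trial-subspace construction built from magnetic coherent states restricted to $\Omega$, with the same uniform density deficit furnishing the matching lower bound.

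\emph{Main obstacle.} The principal technical difficulty is preserving the magnetic factor $\tfrac{B}{4\pi\sinh(tB)}$ while simultaneously extracting the Brownian-avoidance improvement. A crude triangle-inequality bound on the Feynman--Kac--It\^o phase collapses this magnetic factor to the non-magnetic Gaussian $(4\pi t)^{-1}$, destroying the essential exponential decay for $tB$ large and hence the $B\rho_\theta(\Omega)^2$ contribution to the improvement exponent. One must therefore couple the bridge exit estimate to the phase average in a way that keeps the magnetic structure intact---for instance by working operator-theoretically on the semigroup, or by symmetrizing the magnetic phase via Mehler's formula for the Landau heat kernel. Tracking the two regimes $B\rho_\theta(\Omega)^2 \lesssim 1$ (semiclassical dominant) and $B\rho_\theta(\Omega)^2 \gg 1$ (magnetically dominant) and producing the unified exponent $\sqrt{\Lambda}\rho_\theta(\Omega) + B\rho_\theta(\Omega)^2$ from the Mellin inversion is the main technical content of the proof.
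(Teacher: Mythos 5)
Your Step 1 (reduction of $\gamma>1$ to $\gamma=1$ via the Aizenman--Lieb identity and monotonicity of the improvement factor in $\Lambda$) is exactly what the paper does in Section \ref{sec:gamma>} and is fine. The problem is Step 2, which is where all the content lies, and your route there has gaps I do not think can be closed. First, the passage from a heat-trace bound $\Tr e^{-tH_\Omega^{\rm D}}\le |\Omega|\tfrac{B}{4\pi\sinh(tB)}\bigl(1-ce^{-c'\rho^2/t}\bigr)$ to a Riesz-mean bound goes the wrong way for the Laplace transform: a bound on $\Tr(H_\Omega^{\rm D}-\Lambda)_-$ for all $\Lambda$ implies the heat-trace bound by integration, but the converse (your ``Mellin-inverting'') requires inverting a Laplace transform, which does not preserve inequalities. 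The only inequality-preserving substitute (optimizing $(\Lambda-E)_+\le (et)^{-1}e^{t(\Lambda-E)}$ over $t$) loses a multiplicative constant and therefore cannot even reproduce the sharp coefficient $\tfrac{B}{2\pi}\sum_k(B(2k-1)-\Lambda)_-$, let alone improve below it. Second, the obstacle you yourself flag --- that a crude bound on the Feynman--Kac--It\^o phase collapses $\tfrac{B}{4\pi\sinh(tB)}$ to $(4\pi t)^{-1}$ --- is acknowledged but not resolved; ``symmetrizing via Mehler's formula'' is not an argument. Third, the Neumann side (``a dual trial-subspace construction from magnetic coherent states'') is not developed at all.

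The paper avoids all of this by staying on the spectral side. For $\gamma=1$ it writes $\Tr(H_\Omega^{\sharp}-\Lambda)_-$ \emph{exactly} as the semiclassical expression plus remainder terms $\mathcal R_<,\mathcal R_>$ (resp.\ $\mathcal R_<',\mathcal R_>'$) via the abstract identities of Lemma \ref{lem: abstract identities}, using the extension-by-zero isometry $J$ and the Landau projections $\Pi_k$; the sharp coefficient comes from $\Pi_k(x,x)=B/(2\pi)$. The improvement is then a positive lower bound on $\mathcal R_<$ (resp.\ $\mathcal R_<'$), which reduces to showing $\sum_{B(2k-1)>\Lambda}\|\Pi_k\psi_n\|^2\ge c\,e^{-c'(\rho_\theta(\Omega)\sqrt\Lambda+\rho_\theta(\Omega)^2B)}$ for normalized eigenfunctions $\psi_n$ supported in $\Omega$. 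This quantitative uncertainty principle (Theorem \ref{thm: uncertainty principle Landau}) is deduced, via an elementary two-projection lemma, from the Logvinenko--Sereda-type spectral inequality for the Landau Hamiltonian on thick sets, applied to $\Omega^\complement$, which is $(\rho_\theta(\Omega),1-\theta)$-thick. To salvage your approach you would need to replace the heat-kernel machinery by such a spectral inequality; as written, the proposal does not constitute a proof of the $\gamma=1$ case.
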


Again, in view of Remark \ref{inradiusremarks} (a), Theorem \ref{thm: main Landau measure} is an immediate consequence of Theorem~\ref{thm: main Landau inradius}, taking any fixed value of $\theta$.

Let us explain the basic idea of the proof of Theorems \ref{thm: main Laplace inradius} and \ref{thm: main Landau inradius}. Following the classical proofs of the Berezin--Li--Yau and Kr\"oger inequalities, as well as their magnetic analogues, we arrive at terms of the form
$$
\| \1(L>\Lambda) J \psi \|^2 \,,
$$
where $L$ is either the Laplacian $-\Delta$ in $L^2(\R^d)$ or the Landau Hamiltonian $H$ in $L^2(\R^2)$, where $\psi$ is an eigenfunction of either the Dirichlet or the Neumann restriction of these operators to a set $\Omega$ of finite measure and where $J$ denotes the extension by zero of a function defined on $\Omega$ to $\R^d$. In the standard proofs, the above term is dropped since it is nonnegative.

This is wasteful, since the term $\| \1(L>\Lambda) J \psi \|^2$ is never equal to zero. In fact, a version of the uncertainty principle says that a function $\psi$ cannot be simultaneously localized in a set $\Omega$ of finite measure and supported in the spectral subspace $\ran\1(L\leq\Lambda)$. Thus, our task is to find a quantitative version of the uncertainty principle to obtain a positive lower bound on the term $\| \1(L>\Lambda) J \psi \|^2$. As we will show, such lower bounds can be deduced from inequalities known as `spectral inequalities' in the control theory community and as the `Logvinenko--Sereda theorem' in harmonic analysis. For these inequalities the notion of a thick set is relevant and this is where our generalized inradius $\rho_\theta(\Omega)$ appears naturally.

The fact that our arguments in the non-magnetic and in the magnetic case are rather parallel underlines the general mechanism that leads from quantitative versions of the uncertainty principle to improved semiclassical eigenvalue bounds. Our strategy might be applicable in other settings as well, for instance in the setting of homogeneous spaces \cite{St} or of the Heisenberg group \cite{HaLa}.

This paper is organized as follows. Section \ref{sec:uncertainty} is devoted to quantitative versions of the uncertainty principle. The corresponding main results are stated in Subsection~\ref{sec:keybound} and proved in Subsection \ref{sec:keyboundproof}, after having recalled some necessary spectral inequalities in Subsection \ref{sec:specineq}. Section \ref{sec:gamma=} and, in particular, Subsection \ref{sec:mainproof} are devoted to the proof of our main results, Theorems \ref{thm: main Laplace inradius} and \ref{thm: main Landau inradius}, in the case $\gamma=1$. In order to streamline the proofs in the magnetic and in the non-magnetic case, we present the identities underlying the Berezin--Li--Yau and Kr\"oger arguments in an abstract setting in Subsection \ref{sec:abstract}. Finally, in Section \ref{sec:gamma>} we deduce the case $\gamma>1$ in Theorems \ref{thm: main Laplace inradius} and~\ref{thm: main Landau inradius} from the case $\gamma=1$.

%%%%%%%%%%%%%%%%%%%%%%%%%%%%%%%

\section{An uncertainty principle}\label{sec:uncertainty}

\subsection{The key bound}\label{sec:keybound}
The goal of this section is to prove bounds that quantify the fact that if a function $f\not\equiv 0$ is supported on a set of finite measure, then its `high-energy' part $\1(-\Delta>\Lambda) f$ cannot vanish identically. The same is true when $-\Delta$ is replaced by the Landau Hamiltonian. By quantifying this fact we mean giving a lower bound on the norm of $\1(-\Delta>\Lambda) f$ in terms of the norm of $f$, and it is in these bounds that the regularized inradius $\rho_\theta(\Omega)$ enters.

The Fourier transform is denoted by
$$
\widehat f(\xi) = (2\pi)^{-d/2} \int_{\R^d} e^{-i\xi\cdot x} f(x)\,dx \,,
\qquad \xi\in\R^d \,.
$$

\begin{theorem}\label{thm: uncertainty principle Laplacian}
    Let $d\geq 1$. For any $\theta \in (0, 1)$ there are constants $c,c'>0$ such that for any measurable set $\Omega\subset\R^d$ with $\rho_\theta(\Omega)<\infty$, any $f\in L^2(\R^d)$ and any $\Lambda>0$, one has
\begin{align*}
    \int_{|\xi|^2>\Lambda} |\widehat{\1_\Omega f}(\xi) |^2\,d\xi \geq c \, e^{-c' \rho_\theta(\Omega)\sqrt{\Lambda}} \,\lVert \1_\Omega f \rVert^2 \,.
\end{align*}
\end{theorem}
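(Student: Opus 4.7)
The plan is to combine an orthogonal low-frequency/high-frequency decomposition of $g := \1_\Omega f$ with a Logvinenko--Sereda-type (Kovrijkine) spectral inequality applied on the complement $\Omega^c$, exploiting the fact that the regularized inradius $\rho_\theta(\Omega)$ is precisely the geometric scale on which $\Omega^c$ is uniformly `thick'. Setting $\widehat{g_-}(\xi) := \1_{\{|\xi|^2\le\Lambda\}}(\xi)\widehat g(\xi)$ and $\widehat{g_+}:=\widehat g-\widehat{g_-}$, the left-hand side of the claim equals $\|g_+\|^2$, while $\widehat{g_-}$ is supported in the ball $\{|\xi|\le\sqrt{\Lambda}\}$ of diameter $2\sqrt\Lambda$.

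The geometric input is Remark \ref{inradiusremarks}(e), which yields
\begin{equation*}
|\Omega^c\cap B_{\rho_\theta(\Omega)}(x)|\ge (1-\theta)|B_{\rho_\theta(\Omega)}(x)|\qquad\text{for all } x\in\R^d,
\end{equation*}
so that $\Omega^c$ is $(1-\theta,\rho_\theta(\Omega))$-thick. I plan to invoke, from Subsection \ref{sec:specineq}, Kovrijkine's quantitative form of the Logvinenko--Sereda theorem: there exist constants $c_1,c_2>0$ depending only on $\theta$ and $d$ such that for every $h\in L^2(\R^d)$ with $\widehat h$ supported in $\{|\xi|\le\sqrt\Lambda\}$,
\begin{equation*}
\|h\|_{L^2(\Omega^c)}^2 \ge c_1\exp\bigl(-c_2\rho_\theta(\Omega)\sqrt\Lambda\bigr)\|h\|_{L^2(\R^d)}^2.
\end{equation*}
Applying this to $h=g_-$ gives $\|g_-\|_{L^2(\Omega^c)}^2\ge c_1 e^{-c_2\rho_\theta(\Omega)\sqrt\Lambda}\|g_-\|^2$.

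Next, since $g$ vanishes on $\Omega^c$, we have $g_-|_{\Omega^c}=-g_+|_{\Omega^c}$, whence
\begin{equation*}
\|g_-\|_{L^2(\Omega^c)}=\|g_+\|_{L^2(\Omega^c)}\le\|g_+\|.
\end{equation*}
Combining the two inequalities produces $\|g_+\|^2\ge c_1 e^{-c_2\rho_\theta(\Omega)\sqrt\Lambda}\|g_-\|^2$. I close the argument by a simple dichotomy on the size of $\|g_-\|^2$: if $\|g_-\|^2\ge\tfrac12\|g\|^2$, the previous estimate yields the claim with constants $(c_1/2,c_2)$; otherwise $\|g_+\|^2\ge\tfrac12\|g\|^2$, which is stronger than what we need since the exponential factor is $\le 1$. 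Taking the smaller of the two resulting constants gives the asserted $c,c'$.

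The main obstacle is not in this proof itself, which is essentially bookkeeping, but in the spectral inequality: obtaining the exponential dependence $e^{-c'\rho_\theta(\Omega)\sqrt\Lambda}$ on the product of the thickness scale and the spectral radius, uniformly in $\Omega$ and $\Lambda$. This is precisely Kovrijkine's refinement of Logvinenko--Sereda, and tracking the correct dependence of the constants on $\theta$ and $d$ is the step that must be done with care; this is the content of Subsection \ref{sec:specineq}.
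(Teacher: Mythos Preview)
Your proof is correct and uses the same two key ingredients as the paper: the observation that $\Omega^c$ is $(\rho_\theta(\Omega),1-\theta)$-thick, and the Logvinenko--Sereda spectral inequality (Theorem~\ref{thm: spectral ineq Laplace}) applied to the low-frequency part $g_-$ on $\Omega^c$. The only structural difference lies in how the low-frequency bound is converted into a high-frequency one. You use the identity $g_-|_{\Omega^c}=-g_+|_{\Omega^c}$ directly, obtain $\|g_+\|^2\ge c_1 e^{-c_2\rho_\theta(\Omega)\sqrt\Lambda}\|g_-\|^2$, and then close with a dichotomy on whether $\|g_-\|^2\ge\tfrac12\|g\|^2$. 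The paper instead packages this step into an abstract projection lemma (Lemma~\ref{lem: projection lemma}): with $P=\1(-\Delta>\Lambda)$ and $Q=\1_\Omega$, a Cauchy--Schwarz argument yields
\[
\|g_+\|^2 \;\ge\; \|g\|^2\,\frac{\|\1_{\Omega^c}g_-\|^2}{\|g_-\|^2},
\]
so that Logvinenko--Sereda feeds in directly without any case split or loss of a factor~$2$. Your argument is arguably more transparent; the paper's is slightly slicker in constants and is formulated abstractly so that the identical reasoning applies verbatim to the Landau Hamiltonian.
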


We now state a corresponding result in the magnetic case. As is well known, the Landau Hamiltonian $H=(-i\partial_{x_1} + \tfrac B2 x_2 )^2 + (-i\partial_{x_2} - \tfrac B2 x_1)^2$ in $L^2(\R^2)$ can be diagonalized explicitly. Its spectrum consists of the eigenvalues $B(2k-1)$, $k\in\N=\{1,2,3,\ldots\}$, each of which has infinite multiplicity. We denote by $\Pi_k$ the spectral projection of $H$ corresponding to the eigenvalue $B(2k-1)$.

We shall show the following theorem.

\begin{theorem} \label{thm: uncertainty principle Landau}
For any $\theta \in (0, 1)$ there are constants $c,c'>0$ such that for any measurable set $\Omega\subset\R^2$ with $\rho_\theta(\Omega)<\infty$, any $f \in L^2(\R^2)$, any $B>0$ and any $\Lambda>0$, one has
\begin{align*}
\sum_{B(2k-1) > \Lambda} \|\Pi_k \1_\Omega f\|^2 \geq c \, e^{-c'( \rho_\theta(\Omega)\sqrt{\Lambda} +  \rho_\theta(\Omega)^2B)} \, \lVert \1_\Omega f \rVert^2 \,.
\end{align*}
\end{theorem}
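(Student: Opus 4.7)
The plan is to mirror the expected argument for Theorem~\ref{thm: uncertainty principle Laplacian} and reduce the uncertainty bound to a Logvinenko--Sereda type spectral inequality for the Landau Hamiltonian on thick sets. Writing $g := \1_\Omega f$, I would split $g$ along the spectral decomposition of $H$:
$$
g = g_{\rm lo} + g_{\rm hi}, \qquad g_{\rm lo} := \sum_{B(2k-1) \leq \Lambda} \Pi_k g, \qquad g_{\rm hi} := \sum_{B(2k-1) > \Lambda} \Pi_k g.
$$
By orthogonality of the spectral projectors, $\|g\|^2 = \|g_{\rm lo}\|^2 + \|g_{\rm hi}\|^2$. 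Since $g$ is supported in $\Omega$, $\1_{\Omega^c} g = 0$, so $\1_{\Omega^c} g_{\rm lo} = -\1_{\Omega^c} g_{\rm hi}$ and in particular $\|\1_{\Omega^c} g_{\rm lo}\| \leq \|g_{\rm hi}\|$. Thus matters reduce to a lower bound on $\|\1_{\Omega^c} g_{\rm lo}\|$ in terms of $\|g_{\rm lo}\|$, i.e.\ to an observability-type inequality on the complement of $\Omega$ for the low-energy spectral subspace of $H$.

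The main analytic ingredient should be a magnetic Logvinenko--Sereda inequality of the following shape: there exist constants $K, K'$, depending only on $\gamma \in (0,1)$, such that for every measurable set $E \subset \R^2$ satisfying $|E \cap B_r(x)| \geq \gamma |B_r|$ for every $x \in \R^2$, and for every $u \in \ran \1(H \leq \Lambda)$,
$$
\|u\|^2 \leq K \exp\bigl(K'(r\sqrt{\Lambda} + r^2 B)\bigr)\, \|\1_E u\|^2 .
$$
Granting this (to be recalled in Subsection~\ref{sec:specineq}), I would apply it with $r := \rho_\theta(\Omega)$ and $E := \R^2 \setminus \Omega$, which by Remark~\ref{inradiusremarks}(e) is $(1-\theta, r)$-thick in the above sense. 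Taking $u := g_{\rm lo}$ and combining with $\|\1_{\Omega^c} g_{\rm lo}\| \leq \|g_{\rm hi}\|$ yields
$$
\|g_{\rm lo}\|^2 \leq K e^{K'(r\sqrt{\Lambda} + r^2 B)}\, \|g_{\rm hi}\|^2,
$$
and hence
$$
\|g\|^2 \leq \bigl(1 + K e^{K'(r\sqrt{\Lambda} + r^2 B)}\bigr)\, \|g_{\rm hi}\|^2,
$$
which rearranges to the desired bound after renaming the constants.

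The principal obstacle is the magnetic spectral inequality itself, with its explicit $(r\sqrt{\Lambda} + r^2 B)$ dependence in the exponent. For the non-magnetic analogue (Theorem~\ref{thm: uncertainty principle Laplacian}), Kovrijkine's sharp form of the classical Logvinenko--Sereda theorem suffices, and the exponent involves only $r\sqrt{\Lambda}$. The magnetic version must accommodate both the discrete Landau spectrum and the extra natural length scale $B^{-1/2}$; I expect the proof to proceed either via a Bernstein-type estimate for finite linear combinations of magnetic eigenfunctions on balls of radius $\sim r$, combined with a Kovrijkine-style covering argument, or via analyticity/heat-kernel properties of $e^{-tH}$ restricted to the low-energy subspace. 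The delicate point is that the bound must be uniform in all three of the parameters $\rho_\theta(\Omega)$, $\Lambda$, and $B$, with precisely the combination $\rho_\theta(\Omega)\sqrt{\Lambda} + \rho_\theta(\Omega)^2 B$ appearing in the exponent.
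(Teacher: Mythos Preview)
Your proposal is correct and follows the same overall strategy as the paper: reduce to the magnetic Logvinenko--Sereda spectral inequality (Theorem~\ref{thm: spectral ineq Landau}) applied to the low-energy part $g_{\rm lo}$ on the $(\rho_\theta(\Omega),1-\theta)$-thick set $\Omega^\complement$. The only difference is in the elementary reduction step. You use the support identity $\1_{\Omega^\complement}g_{\rm lo}=-\1_{\Omega^\complement}g_{\rm hi}$ to get $\|\1_{\Omega^\complement}g_{\rm lo}\|\leq\|g_{\rm hi}\|$ directly, whereas the paper isolates a Cauchy--Schwarz argument (Lemma~\ref{lem: projection lemma}) yielding the slightly sharper $\|g_{\rm hi}\|^2\geq\|g\|^2\,\|\1_{\Omega^\complement}g_{\rm lo}\|^2/\|g_{\rm lo}\|^2$; your route is more transparent and avoids an auxiliary lemma, at the cost of the harmless constant $c/(c+1)$ in place of $c$. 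One clarification: the magnetic spectral inequality you flag as ``the principal obstacle'' is not proved in this paper but is quoted from \cite{PfeifferTaufer} as Theorem~\ref{thm: spectral ineq Landau}, so no new analytic work is needed there.
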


The remainder of this section is devoted to the proof of these two theorems. They will be deduced from results that are known in the literature as `spectral inequalities'.

%%%%%%%%%%%%%%%%%%%%%%%%%%

\subsection{Known spectral inequalities}\label{sec:specineq}

For $\rho>0, \kappa \in (0, 1)$ we say that a set $S \subset \R^d$ is $(\rho, \kappa)$-thick if it is measurable and satisfies
\begin{equation*}
	|S\cap B_\rho(x)| \geq \kappa |B_\rho(x)| \quad \mbox{for all }x\in \R^d \,.
\end{equation*}

\begin{theorem} \label{thm: spectral ineq Laplace}	For any $d\in\N$ and $\kappa\in(0,1)$ there are constants $c,c'>0$ such that the following holds. Let $S \subset \R^d$ be an $(\rho, \kappa)$-thick set and let $\Lambda>0$. Then for any $g\in \ran\1(-\Delta\leq\Lambda)$ we have 
\begin{align*}
\lVert  \1_S  g \rVert^2 \ge c \, e^{-c' \rho \sqrt{\Lambda}} \, \lVert g \rVert^2 \, .
\end{align*}
\end{theorem}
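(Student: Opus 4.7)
The statement is a precise form of the Logvinenko--Sereda theorem with sharp exponential dependence on $\sqrt\Lambda$, due to Kovrijkine (with multidimensional extensions by several authors). My plan is therefore either to invoke it directly from the literature, or to sketch the standard proof along the following lines.

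The first step is to rescale away $\Lambda$. Replacing $g(x)$ by $g_\Lambda(x)=\Lambda^{d/4} g(\Lambda^{-1/2}x)$ maps $\ran \1(-\Delta\leq\Lambda)$ to $\ran \1(-\Delta\leq 1)$, sends $S$ to $\sqrt\Lambda\, S$, and transforms the hypothesis of $(\rho,\kappa)$-thickness into $(\rho\sqrt\Lambda,\kappa)$-thickness with the same $\kappa$. Thus it suffices to prove that, for any $(R,\kappa)$-thick set $T\subset\R^d$ and any $g\in L^2(\R^d)$ with $\supp \widehat g\subseteq\{|\xi|\leq 1\}$,
$$
\|g\|_{L^2(T)}^2 \geq c\,\kappa^{c' R}\,\|g\|_{L^2(\R^d)}^2,
$$
with $c,c'$ depending only on $d$ and $\kappa$. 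Since $\kappa^{c' R}=e^{-c' R |\log\kappa|}$, this gives exactly the exponential factor claimed after undoing the rescaling.

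The second step is the Kovrijkine argument for the above inequality. Tile $\R^d$ by cubes $\{Q_j\}$ of a fixed side length $a$ (depending only on $d$). By Paley--Wiener, $g$ extends to an entire function of exponential type $1$ in each coordinate. On each cube on which $g$ is not abnormally small, one combines a Bernstein-type bound comparing $\|g\|_{L^\infty(Q_j)}$ with $\|g\|_{L^2(Q_j)}$, with a Remez-type inequality comparing $\|g\|_{L^\infty(Q_j)}$ with $\|g\|_{L^\infty(Q_j\cap T)}$, whose constant is polynomial in $\kappa$. Summing these local estimates, after isolating the few ``bad'' cubes where the function carries most of its mass, yields the required global bound, with the rate $\kappa^{c' R}$ coming from the fact that the number of accumulated factors of the Remez constant is linear in $R/a$.

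The delicate point, and the main obstacle, is this last accounting: obtaining a sharp exponent that is \emph{linear} in $R$ (rather than, say, $R^{1+\alpha}$) requires Kovrijkine's refined one-dimensional Remez-type estimate for restrictions of entire functions of exponential type, applied slice-by-slice, rather than the classical Tur\'an inequality on intervals. Everything else in the proof is mechanical bookkeeping. In practice, I would simply cite Kovrijkine's paper (or one of its higher-dimensional successors) and move on, since the present formulation is their result modulo the trivial rescaling in the first paragraph.
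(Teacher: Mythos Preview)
Your proposal is correct and matches the paper's approach exactly: the paper does not prove this theorem at all but simply cites it as a known spectral inequality, attributing it to Kacnel'son and Logvinenko--Sereda (for $d=1$), with the higher-dimensional version in Wang et al.\ and the sharp $\kappa$-dependence to Kovrijkine. Your additional sketch of Kovrijkine's argument is accurate but goes beyond what the paper provides.
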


This result is due to Kacnel'son \cite{Ka} and Logvinenko and Sereda \cite{LoSe}, who provided a complete proof for $d=1$. The necessary modifications for $d\geq 2$ can be found in \cite[Lemma 2.1]{Wang_etal_19}. We also refer to the latter paper for a further discussion of its history. In particular, we mention the work of Kovrijkine \cite{Ko}, where the dependence of the constants $c$ and $c'$ on $\kappa$ is quantified.

In the recent paper \cite{PfeifferTaufer}, M.\ T\"aufer and the third named author extended Theorem \ref{thm: spectral ineq Laplace} to the case of a constant magnetic field in two dimensions.

\begin{theorem} \label{thm: spectral ineq Landau}
	For any $\kappa\in(0,1)$ there are constants $c,c'>0$ such that the following holds. Let $S \subset \R^2$ be an $(\rho, \kappa)$-thick set and let $B,\Lambda>0$. Then for any $g\in\ran(H\leq\Lambda)$ we have 
\begin{align*}
\lVert  \1_S  g \rVert^2 \ge c \, e^{-c' (\rho \sqrt{\Lambda} + \rho^2 B)} \, \lVert g \rVert^2 \, .
\end{align*}
\end{theorem}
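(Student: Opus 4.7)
\textbf{Plan for Theorem \ref{thm: spectral ineq Landau}.} My approach would be to adapt Kovrijkine's strategy for the classical Logvinenko--Sereda theorem (Theorem \ref{thm: spectral ineq Laplace}) to the magnetic setting, replacing ``Fourier band-limited'' by ``polyanalytic''. Writing $z = x_1 + i x_2$ and using the standard creation/annihilation operators for $H$, one verifies that every $g\in\ran\1(H\leq\Lambda)$ has the explicit form
$$
g(z) = P(z,\bar z)\,e^{-B|z|^2/4},
$$
where $P$ is polyanalytic of order at most $N := \lceil (1+\Lambda/B)/2 \rceil$, i.e.\ $\partial_{\bar z}^N P \equiv 0$. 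Thus the number of occupied Landau levels plays the role of the bandwidth.

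Next I would cover $\R^2$ by a grid of squares $\{Q_\alpha\}$ of side length comparable to $\rho$. The $(\rho,\kappa)$-thickness of $S$ ensures, after inflating the squares by a fixed factor depending only on $\kappa$, a lower density bound $|S \cap Q_\alpha| \geq \kappa'' |Q_\alpha|$ on each square. The target is a local inequality
$$
\|g\|_{L^2(Q_\alpha)}^2 \;\leq\; C\,e^{c'(\rho\sqrt{\Lambda}+\rho^2 B)}\,\|g\|_{L^2(S \cap Q_\alpha)}^2,
$$
which, summed over $\alpha$, yields the theorem.

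To obtain the local bound I would exploit the magnetic translations, which commute with $H$ and map polyanalytic Gaussians to polyanalytic Gaussians centred at the translated base point, to reduce to the case that $Q_\alpha$ is centred at the origin; this neutralizes the otherwise huge growth of the weight $e^{-B|z|^2/4}$ across space. What remains on such a normalized square is a polyanalytic function of order $N$ multiplied by a Gaussian that is of essentially constant size on a square of side $\rho$. A Remez--Tur\'an type inequality applied layer by layer (or, more efficiently, to a suitable Bargmann-type transform of $g$) then delivers the local estimate.

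The key obstacle is calibrating this local estimate to produce the sharp exponent $\rho\sqrt{\Lambda}+\rho^2 B$. A naive polyanalytic Remez bound yields a prefactor of size $(C/\kappa'')^N \sim e^{c\Lambda/B}$, which is far too large in the regime $\rho^2 B \ll 1$. To do better one must genuinely use the Gaussian weight: the idea is to map $g$ by a Bargmann-type transform to an entire function of exponential type $O(\sqrt{\Lambda})$, and apply Kovrijkine's sharp estimate to it on a disk of radius $O(\rho)$. Tracking how this transform distorts the thick set $S \cap Q_\alpha$ and how its Gaussian companion varies over a square of side $\rho$ is what generates the $\rho^2 B$ term in the exponent, and getting both contributions with the correct constants is the main technical point.
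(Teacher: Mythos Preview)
The paper does not prove Theorem \ref{thm: spectral ineq Landau}; it is quoted as a known result from \cite{PfeifferTaufer} (the paper by one of the authors and T\"aufer), with only a short remark explaining why the notion of $(\rho,\kappa)$-thickness used here matches the rectangular thickness condition used in that reference. So there is no proof in the paper for your proposal to be compared against.

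That said, your outline is broadly in the spirit of what \cite{PfeifferTaufer} actually does: the polyanalytic representation of functions in $\ran\1(H\leq\Lambda)$, the use of magnetic translations to recentre, and local Bernstein/Remez-type control on squares of side comparable to $\rho$ are indeed the natural ingredients. You have also correctly identified the main difficulty: a direct polyanalytic Remez argument applied layer by layer only yields an exponent of order $N\sim\Lambda/B$, which is catastrophically bad when $\rho^2 B$ is small. Your suggested fix via a Bargmann-type transform to reduce to an entire function of exponential type $O(\sqrt{\Lambda})$ is plausible in outline, but as written it is not a proof---you would need to specify the transform precisely, verify that it intertwines the thickness condition in a controllable way, and track the Gaussian distortion to actually produce the exponent $\rho\sqrt{\Lambda}+\rho^2 B$. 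In \cite{PfeifferTaufer} this is handled via what the authors call \emph{magnetic Bernstein inequalities}, which give the correct scaling directly; if you intend to reprove the result rather than cite it, you should consult that paper for the missing mechanism.
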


In \cite{PfeifferTaufer} a more general version of thickness is used in terms of rectangles with side lengths $\ell_1, \ell_2$ playing the role of the disks $B_\rho(x)$ in the definition used here. However, any set that is $(\rho, \kappa)$-thick according to the definition used here can easily be verified to be thick in the sense of \cite{PfeifferTaufer} with respect to the square $Q_\rho$ with side length $2\rho$, for which we have $B_\rho(x) \subset Q_\rho(x)= x+(-\rho, \rho)^2$ and $|B_\rho(x)| = (\pi/4)|Q_\rho|$.

%%%%%%%%%%%%%%%%%%%%%%%%%%%%%%%

\subsection{Proof of Theorems \ref{thm: uncertainty principle Laplacian} and \ref{thm: uncertainty principle Landau}}\label{sec:keyboundproof}

Theorems~\ref{thm: spectral ineq Laplace} and \ref{thm: spectral ineq Landau} deal with functions from the `low energy' subspaces $\ran\1(-\Delta\leq\Lambda)$ and $\ran\1(H\leq\Lambda)$, while Theorems \ref{thm: uncertainty principle Laplacian} and \ref{thm: uncertainty principle Landau} involve projections onto the `high energy' subspaces $\ran\1(-\Delta>\Lambda)$ and $\ran\1(H>\Lambda)$. The following lemma will be useful to pass from the first to the second setting. We will apply it with $P = \1(-\Delta>\Lambda)$ or $P=\1(H>\Lambda)$ and with $Q$ defined through multiplication with $\1_\Omega$.

\begin{lemma}\label{lem: projection lemma}
Let $P, Q$ be selfadjoint projections in a Hilbert space $\mathcal{H}$. Then, for any $g \in \mathcal{H}$, it holds that
\begin{align*}
\| PQ g \|_{\mathcal{H}}^2\|(\1-P)Q g\|_{\mathcal{H}}^2 \ge \|Q g \|^2_{\mathcal{H}} \| (\1-Q)(\1-P) Qg \|_{\mathcal{H}}^2 \,.
\end{align*}
In particular, if $g \in \ran(Q)\setminus \ran(P)$, then
\begin{align*}
\| Pg \|_{\mathcal{H}}^2 \ge \|g \|^2_{\mathcal{H}} \frac { \| (\1-Q)(\1-P) g \|_{\mathcal{H}}^2 } {\|(\1-P) g\|_{\mathcal{H}}^2} \,.
\end{align*}
\end{lemma}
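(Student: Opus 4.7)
The plan is to reduce the proof to showing a single Cauchy--Schwarz inequality, after first simplifying the right-hand side.

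First, I would set $h := Qg$, so that $Qh = h$. A direct computation then shows
$$
(\1-Q)(\1-P)h = (\1-P)h - Q(\1-P)h = (\1-P)h - h + QPh = -(\1-Q)Ph\,,
$$
so $\|(\1-Q)(\1-P)Qg\| = \|(\1-Q)PQg\|$. The first inequality is therefore equivalent to
$$
\|Ph\|^2 \|(\1-P)h\|^2 \geq \|h\|^2 \|(\1-Q)Ph\|^2 \,.
$$

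Next, applying Pythagoras for $Q$ (to the vector $Ph$) and for $P$ (to $h$) gives
$$
\|(\1-Q)Ph\|^2 = \|Ph\|^2 - \|QPh\|^2
\qquad\text{and}\qquad
\|h\|^2 = \|Ph\|^2 + \|(\1-P)h\|^2 \,.
$$
Setting $a := \|Ph\|^2$ and $b := \|(\1-P)h\|^2$, the inequality reduces to $ab \geq (a+b)(a - \|QPh\|^2)$, which after rearrangement is equivalent to the lower bound
$$
\|QPh\|^2 \geq \frac{a^2}{a+b} = \frac{\|Ph\|^4}{\|h\|^2} \,.
$$
This is where Cauchy--Schwarz enters: since $P$ is a projection and $Qh = h$,
$$
\langle QPh, h\rangle = \langle Ph, Qh\rangle = \langle Ph, h\rangle = \|Ph\|^2 \,,
$$
so $\|Ph\|^4 = |\langle QPh, h\rangle|^2 \leq \|QPh\|^2 \|h\|^2$, which is exactly the required bound.

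Finally, for the ``in particular'' statement, if $g\in\ran(Q)$ then $h=g$, and the assumption $g\notin\ran(P)$ ensures $\|(\1-P)g\|>0$, so we may divide through by this quantity. The proof is essentially algebraic; there is no real obstacle, only the slightly nonobvious step of recognizing that $(\1-Q)(\1-P)h$ can be rewritten as $-(\1-Q)Ph$ when $h\in\ran(Q)$, after which the claim collapses to a single Cauchy--Schwarz estimate.
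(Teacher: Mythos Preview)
Your proof is correct and uses the same two ingredients as the paper's proof---Cauchy--Schwarz and the Pythagorean identity for projections---so the approaches are essentially the same. The one cosmetic difference is that you first invoke the identity $(\1-Q)(\1-P)h=-(\1-Q)Ph$ for $h\in\ran(Q)$ and then apply Cauchy--Schwarz to $\langle QPh,h\rangle$, whereas the paper applies Cauchy--Schwarz directly to $\langle Qh,Q(\1-P)h\rangle$; these are the same estimate with $P$ and $\1-P$ interchanged, and your preliminary identity is precisely what makes that swap legitimate.
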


\begin{proof}
Using the fact that $P, Q$ are selfadjoint projections and applying the Cauchy--Schwarz inequality, one finds
\begin{equation}\label{eq: CS estimate}
\begin{aligned}
\| (\1-P) Qg \|_{\mathcal{H}}^4 &= \langle Qg, (\1-P) Qg \rangle_{\mathcal{H}}^2 \\
&= \langle Qg, Q  (\1-P) Qg \rangle_{\mathcal{H}}^2 \\
&\le \| Qg \|_{\mathcal{H}}^2 \| Q  (\1-P) Qg  \|_{\mathcal{H}}^2\,.
\end{aligned}
\end{equation}

Recall that if $P'$ is a selfadjoint projection in $\mathcal{H}$, then 
\begin{equation}\label{eq: pythagorean thm proj}
\| h \|_{\mathcal{H}}^2 = \| P' h \|_{\mathcal{H}}^2+ \| (\1-P') h \|_{\mathcal{H}}^2\quad \mbox{for all } h \in \mathcal{H}\,.
\end{equation}
In particular, it holds that
\begin{align*}
\| (\1-P) Qg \|_{\mathcal{H}}^2 &= \|Qg\|_{\mathcal{H}}^2-\|PQg\|_{\mathcal{H}}^2\,, \\
\| Q  (\1-P) Qg  \|_{\mathcal{H}}^2 &= \| (\1-P) Qg  \|_{\mathcal{H}}^2-\| (\1-Q)  (\1-P) Qg  \|_{\mathcal{H}}^2\,.
\end{align*}
Thus \eqref{eq: CS estimate} is equivalent to
\begin{equation*}
	\| (\1-P) Qg \|_{\mathcal{H}}^2\bigl(\|Qg\|_{\mathcal{H}}^2-\|PQg\|_{\mathcal{H}}^2\bigr) 
	\le \| Qg \|_{\mathcal{H}}^2 \bigl(\| (\1-P) Qg  \|_{\mathcal{H}}^2-\| (\1-Q)  (\1-P) Qg  \|_{\mathcal{H}}^2\bigr)\,,
\end{equation*}
which, after rearranging the terms, yields the first estimate in the lemma.

The second estimate follows by noting that for $g\in \ran(Q)\setminus \ran(P)$ we have $Qg=g$ and $\|(\1-P)g\|_{\mathcal{H}}\neq 0$ since $\ran(P)=\ker(\1-P)$.
\end{proof}

We can now prove Theorems \ref{thm: uncertainty principle Laplacian} and \ref{thm: uncertainty principle Landau}. As a preliminary remark, let us find a connection between the regularized inradius of a set and the thickness properties of its complement.

We claim that, if $\rho_\theta(\Omega)<\infty$, then $\Omega^\complement$ is $(\rho_\theta(\Omega), 1-\theta)$-thick. Indeed, for any $x \in \R^d$ it follows from~\eqref{eq: density estimate rho} that
\begin{align*}
	|\Omega^\complement  \cap B_{\rho_\theta(\Omega)} (x)| = |B_{\rho_\theta(\Omega)} (x)|- |\Omega \cap B_{\rho_\theta(\Omega)}| \geq  (1-\theta)|B_{\rho_\theta(\Omega)} (x)|\,.
\end{align*}
Conversely, if $\Omega^\complement$ is $(\rho, \kappa)$-thick then $\rho_{1-\kappa}(\Omega)\leq \rho$; indeed
\begin{equation*}
 	|\Omega\cap B_{\rho} (x)| = |B_{\rho} (x)|- |\Omega^\complement \cap B_{\rho}(x)| \leq  (1-\kappa)|B_{\rho} (x)|\quad \mbox{for all } x\in \R^d\,.
\end{equation*} 

The proofs of Theorems \ref{thm: uncertainty principle Laplacian} and \ref{thm: uncertainty principle Landau} are completely analogous and we only give the details of the second one.

\begin{proof}[Proof of Theorem \ref{thm: uncertainty principle Landau}]
Let $P, Q$ be the selfadjoint projections on $L^2(\R^2)$ given by $P := \1(H>\Lambda)$ and $Q$ defined by multiplication by $\1_\Omega$. Note that for these projections $\1-P=\1(H\leq\Lambda)$ and $\1-Q$ is given by multiplication by $\1_{\Omega^\complement}$. 

Fix $f \in L^2(\R^2)$. If $\1_\Omega f \in \ran(P)$ then 
\begin{equation*}
	\sum_{B(2k-1)>\Lambda} \|\Pi_k \1_\Omega f\|^2 = \| P \1_\Omega f \|^2 = \|\1_\Omega f\|^2
\end{equation*}
and the claimed inequality holds as soon as $c\leq 1$. If $\1_\Omega f \notin \ran(P)$ we can instead use the second bound in Lemma~\ref{lem: projection lemma} applied to $g:=\1_\Omega f \in \ran(Q)\setminus \ran(P)$ to deduce that
\begin{align*}
	\sum_{B(2k-1)>\Lambda} \|\Pi_k \1_\Omega f\|^2 = \| P \1_\Omega f \|^2  
	\ge \| \1_\Omega f \|^2 \frac { \| \1_{\Omega^\complement} (\1-P) \1_\Omega f\|^2 }{ \|(\1-P) \1_\Omega f \|^2} \, .
\end{align*}
We can now apply Theorem \ref{thm: spectral ineq Landau} to $g:= (\1-P) \1_\Omega f$ and conclude that 
\begin{align*}
\sum_{B(2k-1)>\Lambda} \|\Pi_k \1_\Omega f\|^2 \ge \| \1_\Omega f \|^2 c e^{- c' ( \rho_\theta(\Omega) \sqrt{\Lambda}+ \rho_\theta(\Omega)^2B)} \, ,
\end{align*}
with $c, c'$ as in Theorem~\ref{thm: spectral ineq Landau}. Combining the two cases proves Theorem~\ref{thm: uncertainty principle Landau}.
\end{proof}

%%%%%%%%%%%%%%%%%%%%%%%%%%%%%%%

\section{Proof of the improved semiclassical bounds}\label{sec:gamma=}

\subsection{Abstract formulas}\label{sec:abstract}

We begin by proving two somewhat abstract identities which will be at the core of our proofs of our main theorems. Although not written down in the general form we prove them here, these identities are central in the proofs of the Berezin--Li--Yau \cite{Berezin,LiYau_83} and Kr\"oger \cite{Kroger_92,Laptev_97} inequalities, as well as their magnetic analogues proved in \cite{ErdosLossVougalter} and \cite{Frank_Semigroup}. We believe that the abstract formulation presented here captures that the underlying principle is very general, and that unifying the proofs will be beneficial for future applications in other contexts. One specific advantage of our formulation is that it completely avoids distinguishing into cases depending on the nature of the spectrum of the involved operators.

\begin{lemma}\label{lem: abstract identities}
    Let $\mathcal H$ and $\hat{\mathcal H}$ be separable Hilbert spaces and $J:\mathcal H\to\hat{\mathcal H}$ an isometry. Let $L$ and $\hat L$ be nonnegative selfadjoint operators in $\mathcal H$ and $\hat{\mathcal H}$, respectively, with spectral measures $E$ and $\hat E$ and quadratic form domains $\mathcal{Q}$ and $\hat{\mathcal{Q}}$. Set $E^\bot = \1-E$ and $\hat E^\bot = \1-\hat E$ and let $\Lambda>0$.
    \begin{enumerate}
        \item\label{itm: Abstract Neu} If $J^* \hat{\mathcal Q}\subset \mathcal Q$ and $\hat E(\Lambda)J(L-\Lambda)J^*\hat E(\Lambda)$ is trace class, then
        $$
        \Tr (L-\Lambda)_\limminus = - \int (\lambda - \Lambda) \,d\Tr \hat E(\Lambda) J E(\lambda) J^* \hat E(\Lambda) + \mathcal R_< + \mathcal R_>
        $$
        where
        \begin{align*}
            \mathcal R_< & = \int (\lambda-\Lambda)_\limminus \, d \Tr \hat E(\Lambda)^\bot  J E(\lambda) J^* \hat E(\Lambda)^\bot \,,\\
            \mathcal R_> & = \int (\lambda-\Lambda)_\limplus \,d \Tr \hat E(\Lambda)  J E(\lambda) J^* \hat E(\Lambda) \,.
        \end{align*}
        \item\label{itm: Abstract Dir} If $L= J^* \hat L J$ in the sense of quadratic forms, then
        $$
        \Tr (L-\Lambda)_\limminus = \int (\lambda - \Lambda)_\limminus \,d\Tr J^* \hat E(\lambda) J - \mathcal R_<' - \mathcal R_>'
        $$
        where
        \begin{align*}
            \mathcal R_<' & := \int (\lambda-\Lambda)_\limplus \, d \Tr E(\Lambda)  J^* \hat E(\lambda) J E(\Lambda) \,,\\
            \mathcal R_>' & := \int (\lambda-\Lambda)_\limminus \, d \Tr E(\Lambda)^\bot  J^* \hat E(\lambda) J E(\Lambda)^\bot \,.
        \end{align*}
    \end{enumerate}
\end{lemma}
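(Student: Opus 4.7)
The plan is to treat both identities as algebraic consequences of three ingredients: the isometry relation $J^*J=\1$, which by cyclicity yields $\Tr JE(\lambda)J^*=\Tr E(\lambda)$ whenever the trace is finite; the Pythagorean splitting
\[
\Tr T=\Tr PTP+\Tr P^\bot TP^\bot,
\]
valid for any selfadjoint projection $P$ and trace class $T$ (the cross terms vanish by cyclicity); and the spectral representations of $L$ and $\hat L$.

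For the first identity I would start from $\Tr(L-\Lambda)_\limminus=\int_0^\Lambda(\Lambda-\lambda)\, d\Tr E(\lambda)$. The trace-class hypothesis on $\hat E(\Lambda)J(L-\Lambda)J^*\hat E(\Lambda)$ guarantees trace-class-ness of all operators appearing below. Combining the isometry with the Pythagorean splitting with $P=\hat E(\Lambda)$ applied to $T=JE(\lambda)J^*$ gives
\[
d\Tr E(\lambda)=d\Tr \hat E(\Lambda) JE(\lambda)J^*\hat E(\Lambda)+d\Tr \hat E(\Lambda)^\bot JE(\lambda)J^*\hat E(\Lambda)^\bot.
\]
Integrating against $(\Lambda-\lambda)_\limplus$ on $[0,\Lambda]$, the $\hat E(\Lambda)^\bot$ term is exactly $\mathcal R_<$, while the $\hat E(\Lambda)$ term equals $-\int_0^\Lambda(\lambda-\Lambda)\, d\Tr\hat E(\Lambda) JE(\lambda)J^*\hat E(\Lambda)$; extending this latter integral to $[0,\infty)$ introduces precisely $\mathcal R_>$, producing the claimed formula.

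For the second identity I would enumerate an orthonormal basis $\{f_j\}$ of $\ran E(\Lambda)$ with $Lf_j=\mu_j f_j$ and $\mu_j\leq\Lambda$. Since $L=J^*\hat L J$ as quadratic forms, $Jf_j\in\hat{\mathcal Q}$ and
\[
\mu_j-\Lambda=\langle Jf_j,(\hat L-\Lambda)Jf_j\rangle=\int(\lambda-\Lambda)\, d\|\hat E(\lambda)Jf_j\|^2.
\]
Summing over $j$ and recognising $\sum_j\|\hat E(\lambda)Jf_j\|^2=\Tr E(\Lambda)J^*\hat E(\lambda)JE(\Lambda)$ yields
\[
\Tr(L-\Lambda)_\limminus=-\int(\lambda-\Lambda)\, d\Tr E(\Lambda)J^*\hat E(\lambda)JE(\Lambda).
\]
Splitting $(\lambda-\Lambda)=(\lambda-\Lambda)_\limplus-(\lambda-\Lambda)_\limminus$ immediately isolates the contribution $-\mathcal R_<'$. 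To the remaining integral $\int(\lambda-\Lambda)_\limminus\, d\Tr E(\Lambda)J^*\hat E(\lambda)JE(\Lambda)$ I would then apply the Pythagorean splitting with $P=E(\Lambda)$ and $T=J^*\hat E(\lambda)J$, converting it into $\int(\lambda-\Lambda)_\limminus\, d\Tr J^*\hat E(\lambda)J-\mathcal R_>'$.

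The hard part will be administrative rather than conceptual: verifying that all spectral integrals converge absolutely and that the Pythagorean splitting applies in a setting where the operator family $J^*\hat E(\lambda)J$ is typically not trace class on its own. This should be resolved by always keeping the finite-trace factor $E(\Lambda)$ (or $\hat E(\Lambda)$) in the product, which, together with the trace-class hypothesis in the first identity and the finiteness of the left-hand side implicit in the second, makes every trace encountered genuinely convergent.
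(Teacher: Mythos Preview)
Your approach is essentially the same as the paper's: both parts rest on the Pythagorean splitting $\Tr T = \Tr PTP + \Tr P^\bot T P^\bot$ applied with $P=\hat E(\Lambda)$ in part~(1) and $P=E(\Lambda)$ in part~(2), together with the relation between $L$ and $\hat L$ transported by $J$. The paper phrases the computation at the level of bounded operator identities before taking traces, whereas you work directly with spectral integrals of traces; these are two presentations of the same argument.

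Two technical points need attention. First, in part~(2) you assume an orthonormal basis of $\ran E(\Lambda)$ consisting of eigenvectors of $L$; the lemma does not assume pure point spectrum below~$\Lambda$, so this is not available in general. The fix is immediate: take any orthonormal basis $\{f_j\}$ of $\ran E(\Lambda)\subset\mathcal Q$ and use $\langle f_j,(L-\Lambda)f_j\rangle=\langle Jf_j,(\hat L-\Lambda)Jf_j\rangle$ from the quadratic-form assumption, then sum. Second, your justification of convergence is not quite right: the trace-class hypothesis in~(1) does \emph{not} make $E(\lambda)$ or $JE(\lambda)J^*$ trace class, and in~(2) finiteness of $\Tr(L-\Lambda)_\limminus$ is not assumed (the paper explicitly allows the identity to hold in $[0,\infty)\cup\{+\infty\}$). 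Your argument still goes through because every trace you manipulate is of a nonnegative operator, so all identities hold as equalities in $[0,\infty]$ with Tonelli justifying the interchange of sum and spectral integral; but you should state it that way rather than appealing to trace-class-ness that may fail. The paper handles this by establishing the relevant identities first as equalities of bounded operators and only then summing diagonal matrix elements over a basis adapted to the projection.
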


By the assumption $L=J^* \hat L J$ in the sense of quadratic forms we mean the following. Let $q$ and $\hat q$ be the quadratic forms of $L$ and $\hat L$, respectively. Then we assume $J \mathcal Q\subset \hat{\mathcal Q}$ and $\hat q[J\psi] = q[\psi]$ for all $\psi\in\mathcal Q$.

We have written the second identity in an elegant, but slightly ambiguous way because there could be cancellations of infinities, leaving the expressions undefined. The precise meaning is as follows. The identity in \eqref{itm: Abstract Dir} is to be understood with $\mathcal R_<'$ and $\mathcal R_>'$ added to both sides. Then the left side is a sum of three positive terms, while there is one positive term on the right side. Identity \eqref{itm: Abstract Dir} rewritten in this form holds in the sense of an equality of elements in $[0,\infty)\cup\{+\infty\}$, which is also the case for the identity in \eqref{itm: Abstract Neu}.

\begin{proof}
    We begin with the proof of \eqref{itm: Abstract Neu}. In the sense of bounded operators in $\hat{\mathcal H}$
    \begin{align*}
        J(L-\Lambda)_\limminus J^* 
            &= (\hat E(\Lambda)+\hat E(\Lambda)^\perp)J(L-\Lambda)_\limminus J^*(\hat E(\Lambda)+\hat E(\Lambda)^\perp)\\
            &=\hat E(\Lambda) J(L-\Lambda)_\limminus J^*\hat E(\Lambda) +\hat E(\Lambda)^\perp J(L-\Lambda)_\limminus J^*\hat E(\Lambda)^\perp\\
            &\quad +\hat E(\Lambda) J(L-\Lambda)_\limminus J^*\hat E(\Lambda)^\perp+\hat E(\Lambda)^\perp J(L-\Lambda)_\limminus J^*\hat E(\Lambda)\,.
    \end{align*}
    In the sense of quadratic forms on $\hat{\mathcal H}$ we can rewrite the first term in the right-hand side as
    \begin{align*}
        \hat E(\Lambda) J(L-\Lambda)_\limminus J^*\hat E(\Lambda) = -\hat E(\Lambda) J(L-\Lambda)J^*\hat E(\Lambda) + \hat E(\Lambda) J(L-\Lambda)_\limplus J^*\hat E(\Lambda)\,.
    \end{align*}
    By assumption $\hat E(\Lambda)J(L-\Lambda)J^*\hat E(\Lambda)$ is trace class and, therefore, bounded. Since the left-hand side is bounded, the same must be true for the second term in the right-hand side. Therefore, this identity holds in the sense of bounded operators on $\hat{\mathcal{H}}$.
    
    When combined we have shown that
    \begin{equation}\label{eq: op identity Neu}
    \begin{aligned}
        J(L-\Lambda)_\limminus J^*
            &=-\hat E(\Lambda) J(L-\Lambda)J^*\hat E(\Lambda) + \hat E(\Lambda) J(L-\Lambda)_\limplus J^*\hat E(\Lambda)\\
            &\quad +\hat E(\Lambda)^\perp J(L-\Lambda)_\limminus J^*\hat E(\Lambda)^\perp +\hat E(\Lambda) J(L-\Lambda)_\limminus J^*\hat E(\Lambda)^\perp\\
            &\quad
            +\hat E(\Lambda)^\perp J(L-\Lambda)_\limminus J^*\hat E(\Lambda)\,.
    \end{aligned}
    \end{equation}

    As we can write $\hat{\mathcal{H}}$ as the direct sum of $\ran(\hat E(\Lambda))$ and $\ran(\hat E(\Lambda)^\perp)$, there is a complete orthonormal set $\{\psi_k\}_{k\geq 1}\subset \hat{\mathcal{H}}$ such that, for each $k$, $\psi_k\in \hat{\mathcal{Q}}$ and one of $\hat E(\Lambda)\psi_k$ and $\hat E(\Lambda)^\perp \psi_k$ is zero. Thus for each $K\geq 1$ the identity in \eqref{eq: op identity Neu} implies
    \begin{align*}
        \sum_{k=1}^K\langle \psi_k, J(L-\Lambda)_\limminus J^*\psi_k\rangle_{\hat{\mathcal{H}}}
            &=-\sum_{k=1}^K\langle \psi_k,\hat E(\Lambda) J(L-\Lambda)J^*\hat E(\Lambda)\psi_k\rangle_{\hat{\mathcal{H}}} \\
            &\quad + \sum_{k=1}^K\langle \psi_k,\hat E(\Lambda) J(L-\Lambda)_\limplus J^*\hat E(\Lambda)\psi_k\rangle_{\hat{\mathcal{H}}}\\
            &\quad 
            +\sum_{k=1}^K\langle \psi_k,\hat E(\Lambda)^\perp J(L-\Lambda)_\limminus J^*\hat E(\Lambda)^\perp\psi_k\rangle_{\hat{\mathcal{H}}}\,.
    \end{align*}
    Note that all terms in each of the sums except for the first one in the right-hand side are nonnegative. However, the limit as $K\to\infty$ of the first sum on the right exists by the assumption that $\hat E(\Lambda) J(L-\Lambda)J^*\hat E(\Lambda)$ is trace class. Therefore, by taking the limit $K \to \infty$ we conclude that
    \begin{align*}
        \Tr(J(L-\Lambda)_\limminus J^*)
            &=-\Tr(\hat E(\Lambda) J(L-\Lambda)J^*\hat E(\Lambda)) \\
            &\quad + \Tr(\hat E(\Lambda) J(L-\Lambda)_\limplus J^*\hat E(\Lambda))\\
            &\quad 
            +\Tr(\hat E(\Lambda)^\perp J(L-\Lambda)_\limminus J^*\hat E(\Lambda)^\perp)\,.
    \end{align*}
    Here we used the definition of the trace of a nonnegative, bounded operator; see \cite[Section VI.6]{ReSi1}. Using the cyclicity of the trace, that $J^*J = \1$, and rewriting the traces in the right-hand side in terms of spectral measures proves the claimed identity.
    
    \medskip

    We turn now to the proof of \eqref{itm: Abstract Dir}. As bounded operators in $\mathcal H$ we have
    $$
    (L-\Lambda)_\limminus + E(\Lambda) L E(\Lambda) = \Lambda E(\Lambda) \,.
    $$
    Using $J^*J=\1$ and $L=J^* \hat L J$, we can rewrite this as
    $$
        (L-\Lambda)_\limminus + E(\Lambda) J^* \hat L J E(\Lambda) =  E(\Lambda) J^* \Lambda J E(\Lambda) \,,
    $$
    
    Furthermore, using $x = (x-\Lambda)_\limplus+\min\{x, \Lambda\}$ for $x\geq 0$, we deduce that
    \begin{align*}
        E(\Lambda) J^* \hat L J E(\Lambda) &= E(\Lambda) J^* ((\hat L-\Lambda)_\limplus+\min\{\hat L, \Lambda\}) J E(\Lambda)\\
        &= E(\Lambda) J^*(\hat L-\Lambda)_\limplus J E(\Lambda)+E(\Lambda) J^* \min\{\hat L, \Lambda\}J E(\Lambda)\,.
    \end{align*}
    Since the term on the left and the second term on the right are both bounded operators, the same must be true for the first term on the right. (The term on the left is bounded since it is equal to $E(\Lambda)L E(\Lambda)$.)

    Similarly, writing $\Lambda = (x-\Lambda)_\limminus+\min\{x, \Lambda\}$ for $x\geq 0$, we have that
    \begin{align*}
        E(\Lambda) J^* \Lambda J E(\Lambda) &= E(\Lambda) J^* ((\hat L-\Lambda)_\limminus+\min\{\hat L, \Lambda\}) J E(\Lambda)\\
        &= E(\Lambda) J^*(\hat L-\Lambda)_\limminus J E(\Lambda)+E(\Lambda) J^* \min\{\hat L, \Lambda\}J E(\Lambda)\,.
    \end{align*}

    Combining the above we have shown that as bounded operators on $\mathcal H$
    \begin{align*}
        & (L-\Lambda)_\limminus + E(\Lambda) J^*(\hat L-\Lambda)_\limplus J E(\Lambda)+E(\Lambda) J^* \min\{\hat L, \Lambda\}J E(\Lambda)\\
        &\quad =  E(\Lambda) J^*(\hat L-\Lambda)_\limminus J E(\Lambda)+E(\Lambda) J^* \min\{\hat L, \Lambda\}J E(\Lambda) \,.
    \end{align*}
    Since each of the terms are bounded operators, this is implies that
    \begin{align*}
        (L-\Lambda)_\limminus + E(\Lambda) J^*(\hat L-\Lambda)_\limplus J E(\Lambda)=  E(\Lambda) J^*(\hat L-\Lambda)_\limminus J E(\Lambda)\,.
    \end{align*}

    Note that the operator
    $$
        E(\Lambda)^\bot J^* (\hat L-\Lambda)_\limminus J E(\Lambda)^\bot+E(\Lambda) J^* (\hat L-\Lambda)_\limminus J E(\Lambda)^\bot+E(\Lambda)^\bot J^* (\hat L-\Lambda)_\limminus J E(\Lambda)
    $$
    is bounded, since $(\hat L-\Lambda)_\limminus$ is bounded. Adding this operator to both sides, we obtain that as sums of bounded operators on $\mathcal H$,
    \begin{align*}
        J^*(\hat L-\Lambda)_\limminus J &= (L-\Lambda)_\limminus + E(\Lambda) J^*(\hat L-\Lambda)_\limplus J E(\Lambda)+E(\Lambda)^\bot J^* (\hat L-\Lambda)_\limminus J E(\Lambda)^\bot\\
        &\quad +E(\Lambda) J^* (\hat L-\Lambda)_\limminus J E(\Lambda)^\bot+E(\Lambda)^\bot J^* (\hat L-\Lambda)_\limminus J E(\Lambda)\,.
    \end{align*}
    The proof can now be completed in the same way as for the first identity in the lemma.
\end{proof}

\begin{remark}
    It is quite possible that Lemma \ref{lem: abstract identities} remains valid without the assumption that $\mathcal H$ and $\hat{\mathcal H}$ are separable. This would probably involve a definition of the trace of a bounded nonnegative operator via nets. Since the separability assumption is satisfied in all our examples, we content ourselves with this case.
\end{remark}

%%%%%%%%%%%%%%%%%%%%%%%%%%%%%%%%%%%%%%

\subsection{Proof of main results}\label{sec:mainproof}

As mentioned above, the identities in Lemma~\ref{lem: abstract identities} can be used to prove the inequalities of Berezin--Li--Yau, Kr\"oger, Erd\H os--Loss--Vougalter and one of the authors. Indeed, as we shall see below, applying \eqref{itm: Abstract Dir} of Lemma~\ref{lem: abstract identities} with $\mathcal{H} = L^2(\Omega), \hat{\mathcal H}= L^2(\R^d)$, $L$ being either $-\Delta^{\rm D}_\Omega$ or $H_\Omega^{\rm D}$, $\hat L$ being $-\Delta$ or $H$, and $J\colon L^2(\Omega) \to L^2(\R^d)$ defined as extension by zero and estimating $\mathcal R_<'\geq 0, \mathcal R_>'\geq 0$ yields the Dirichlet version of these inequalities. Similarly, applying \eqref{itm: Abstract Neu} of Lemma~\ref{lem: abstract identities} with $\mathcal{H} = L^2(\Omega), \hat{\mathcal H}= L^2(\R^d)$, $L$ being either $-\Delta^{\rm N}_\Omega$ or $H_\Omega^{\rm N}$, $\hat L$ being $-\Delta$ or $H$, and $J\colon L^2(\Omega) \to L^2(\R^d)$ defined as extension by zero and estimating $\mathcal R_<\geq 0, \mathcal R_>\geq 0$ yields the Neumann version of these inequalities. To obtain our improved inequalities we shall utilize the uncertainty principles discussed in the first part of this paper to provide a positive lower bound for $\mathcal R_<$ and $\mathcal R_<'$, respectively. 

In each case we shall apply the uncertainty principles of Theorems~\ref{thm: uncertainty principle Laplacian} and \ref{thm: uncertainty principle Landau} with $f$ being an eigenfunction of the operator of interest (extended by zero to all of $\R^d$). As such it is not unreasonable to believe that the conclusions of the uncertainty principles could be strengthened in this restricted class of functions. However, under the minimal assumption that $|\Omega|<\infty$ it is difficult to control global properties of the eigenfunctions in any uniform manner. Therefore, strengthening the conclusions of Theorems~\ref{thm: uncertainty principle Laplacian} and \ref{thm: uncertainty principle Landau} for these eigenfunctions appears challenging without imposing further assumptions on $\Omega$.

\begin{proof}[Proof of Theorem~\ref{thm: main Laplace inradius} for $-\Delta_\Omega^{\rm D}, \gamma=1$]
    Let $\Omega\subset\R^d$ be an open set of finite measure. 

\medskip
    
    \emph{Step 1.} We recall that the finite measure assumption implies that $-\Delta_\Omega^{\rm D}$ has discrete spectrum, each of its eigenvalues has finite multiplicity and their only accumulation point is $+\infty$. Let $\{\psi_n\}_{n\geq 1}$ be a complete orthonormal set of eigenfunctions of $-\Delta_\Omega^{\rm D}$ and $\{\lambda_n\}_{n\geq 1}$ the corresponding eigenvalues.

	We will apply \eqref{itm: Abstract Dir} in Lemma~\ref{lem: abstract identities} with $\mathcal H= L^2(\Omega), \hat{\mathcal H}= L^2(\R^d)$, $L = -\Delta_\Omega^{\rm D}$, $\hat L =-\Delta$, and $J\colon L^2(\Omega) \to L^2(\R^d)$ defined by
	\begin{equation*}
		J\phi(x) = \begin{cases}
			\phi(x) & \mbox{for }x\in \Omega\,,\\
			0 & \mbox{for }x\in \Omega^\complement\,.
		\end{cases}
	\end{equation*}
	Note that $L = J^*\hat L J$ in the sense of quadratic forms.

	In the notation of Lemma~\ref{lem: abstract identities} we have for any $\mu \geq 0$
	\begin{align*}
		E(\mu) = \sum_{n: \lambda_n<\mu} \langle \psi_n, \cdot \rangle \psi_n\quad \mbox{and}\quad 
		\hat E(\mu) = \mathcal{F}^{-1}(\1_{|\,\cdot\,|^2<\mu} \widehat{(\,\cdot\,)})\,.
	\end{align*}
	By Lemma~\ref{lem: abstract identities} \eqref{itm: Abstract Dir} it holds that
	\begin{align*}
		\Tr(-\Delta_\Omega^{\rm D}-\Lambda)_\limminus = \int (\lambda-\Lambda)_\limminus d\Tr J^*\hat E(\lambda)J - \mathcal{R}_<'-\mathcal{R}_>'\,.
	\end{align*}

    \medskip

    \emph{Step 2.} We compute the first term on the right. For any complete orthogonal set $\{\phi_j\}_{j\geq 1}\subset L^2(\Omega)$ the fact that the Fourier transform is unitary combined with the Pythagorean theorem implies
	\begin{align*}
		\Tr J^*\hat E(\lambda)J
		&=
		\sum_{j\geq 1}\langle J\phi_j, \hat E(\lambda)J\phi_j\rangle\\
		&= 
		\sum_{j\geq 1}\int_{|\xi|^2<\lambda} |\hat \phi_j(\xi)|^2\,d\xi\\
		&=
		(2\pi)^{-d}\sum_{j\geq 1}\int_{|\xi|^2<\lambda} |\langle \phi_j, e^{i(\cdot)\cdot\xi}\rangle_{L^2(\Omega)}|^2\,d\xi\\
		&=
		(2\pi)^{-d}\int_{|\xi|^2<\lambda} \|e^{i(\cdot)\cdot\xi}\|^2_{L^2(\Omega)}\,d\xi\\
		&= (2\pi)^{-d}|\Omega|\omega_d \lambda^{d/2}\,.
	\end{align*}
	Therefore,
	\begin{equation*}
		\int (\lambda-\Lambda)_\limminus d\Tr J^*\hat E(\lambda)J 
		 = L_{1,d}^{\rm sc}|\Omega|\Lambda^{1+d/2}\,.
	\end{equation*}

    \medskip

    \emph{Step 3.} Combining the formula from Step 1 with the computation in Step 2, we see that the Berezin--Li--Yau inequality follows from the trivial estimates $\mathcal R_>'\geq0, \mathcal R_<'\geq 0$.

	To do better, we provide a positive lower bound for $\mathcal R'_<$. In the case under consideration the fact that $\{\psi_n\}_{n\geq 1}$ is a complete orthonormal set in $L^2(\Omega)$ entails that
	\begin{align*}
		\Tr E(\Lambda) J^*\hat E(\lambda)J E(\Lambda)
		&=
		\sum_{n}\langle \psi_n, E(\Lambda) J^*\hat E(\lambda)J E(\Lambda)\psi_n\rangle_{L^2(\Omega)}\\
		&=
		\sum_{n:\lambda_n<\Lambda}\langle \psi_n, J^*\hat E(\lambda)J \psi_n\rangle_{L^2(\Omega)}\\
		&=
		\sum_{n:\lambda_n<\Lambda}\int_{|\xi|^2<\lambda}|\hat \psi_n(\xi)|^2\,d\xi\\
		&= \Tr(-\Delta_\Omega^{\rm D}-\Lambda)_\limminus^0- \sum_{n:\lambda_n<\Lambda}\int_{|\xi|^2>\lambda}|\hat \psi_n(\xi)|^2\,d\xi\,.
	\end{align*}
	We recall that $\Tr(-\Delta_\Omega^{\rm D}-\Lambda)_\limminus^0$ denotes the number of eigenvalues $\lambda_n<\Lambda$.
	Therefore, by an integration by parts,
	\begin{align*}
		\mathcal{R}'_< = \sum_{\lambda_n<\Lambda}\int_{\Lambda}^\infty \int_{|\xi|^2>\lambda} |\hat\psi_n(\xi)|^2\,d\xi d\lambda\,.
	\end{align*}
    By Theorem~\ref{thm: uncertainty principle Laplacian} and the normalization $\|\psi_n\|_{L^2(\Omega)} = 1$ we have
    $$
    \int_{|\xi|^2>\lambda} |\hat\psi_n(\xi)|^2\,d\xi \geq c e^{-c'\rho_\theta(\Omega)\sqrt{\lambda}} \,.
    $$
    Thus, bounding the resulting integral as in \cite{FrankLarson_Convex24}, we find
	\begin{align*}
		\mathcal{R}'_< &\geq \sum_{\lambda_n<\Lambda}\int_{\Lambda}^\infty c e^{-c'\rho_\theta(\Omega)\sqrt{\lambda}} d\lambda\\
		&\geq c \Tr(-\Delta_\Omega^{\rm D}-\Lambda)_\limminus^0 \int_{\Lambda}^{4\Lambda} e^{-c'\rho_\theta(\Omega)\sqrt{\lambda}}\,d\lambda\\
		&\geq 
		3c \Tr(-\Delta_\Omega^{\rm D}-\Lambda)_\limminus^0  \Lambda e^{-2c'\rho_\theta(\Omega)\sqrt{\Lambda}}\,.
	\end{align*}

	Since $\mathcal{R}_>'\geq 0$ we conclude that
	\begin{equation*}
		\Tr(-\Delta_\Omega^{\rm D}-\Lambda)_\limminus \leq L_{1,d}^{\rm sc}|\Omega|\Lambda^{1+d/2} - 3c \Tr(-\Delta_\Omega^{\rm D}-\Lambda)_\limminus^0  \Lambda e^{-2c'\rho_\theta(\Omega)\sqrt{\Lambda}}\,.
	\end{equation*}

    To finish the proof we distinguish two cases. The first case is where
    $$
    \Tr(-\Delta_\Omega^{\rm D}-\Lambda)_\limminus^0 \geq \frac1{1+3c} L_{1,d}^{\rm sc}|\Omega|\Lambda^{d/2} \,.
    $$
    In this case, the previous estimate implies that
    $$
    \Tr(-\Delta_\Omega^{\rm D}-\Lambda)_\limminus \leq L_{1,d}^{\rm sc}|\Omega|\Lambda^{1+d/2} \left( 1 - \frac{3c}{1+3c} e^{-2c'\rho_\theta(\Omega)\sqrt{\Lambda}} \right).
    $$
    Meanwhile, when
    $$
    \Tr(-\Delta_\Omega^{\rm D}-\Lambda)_\limminus^0  < \frac1{1+3c} L_{1,d}^{\rm sc}|\Omega|\Lambda^{d/2} \,,
    $$
    we have
    \begin{align*}
        \Tr(-\Delta_\Omega^{\rm D}-\Lambda)_\limminus
        & = \int_0^\Lambda \Tr(-\Delta_\Omega^{\rm D} - \lambda)_\limminus^0\,d\lambda \\
        & \leq \Lambda \Tr(-\Delta_\Omega^{\rm D} - \Lambda)_\limminus^0 \\
        & < \frac1{1+3c} L_{1,d}^{\rm sc}|\Omega|\Lambda^{1+d/2} \\
        & = L_{1,d}^{\rm sc}|\Omega|\Lambda^{1+d/2} \left( 1 - \frac{3c}{1+3c} \right).
    \end{align*}
    Thus, in both cases,
    $$
    \Tr(-\Delta_\Omega^{\rm D}-\Lambda)_\limminus \leq L_{1,d}^{\rm sc}|\Omega|\Lambda^{1+d/2} \left( 1 - \frac{3c}{1+3c} e^{-2c'\rho_\theta(\Omega)\sqrt{\Lambda}} \right).
    $$
    This proves Theorem \ref{thm: main Laplace inradius} for $\gamma=1$ in the Dirichlet case.
\end{proof}

\begin{proof}[Proof of Theorem~\ref{thm: main Laplace inradius} for $-\Delta_\Omega^{\rm N}, \gamma=1$]
    Let $\Omega\subset\R^d$ be an open set of finite measure.
    
    \medskip

    \emph{Step 1.} Let $\Sigma := \inf \sigma_{\rm ess}(-\Delta_\Omega^{\rm N})$. If $\Lambda>\Sigma$ the left-hand side of the claimed inequality is infinite and thus the inequality is trivially true. Therefore, in the rest of the proof we assume that $\Lambda \leq \Sigma$. Let $\{\lambda_n\}_{n\geq 1}$ be the set of eigenvalues of $-\Delta_\Omega^{\rm N}$ that are less than $\Lambda\leq \Sigma$ and let $\{\psi_n\}_{n\geq 1}$ be an orthonormal set of eigenfunctions corresponding to these eigenvalues.

	We aim to apply \eqref{itm: Abstract Neu} of Lemma~\ref{lem: abstract identities} with $\mathcal H= L^2(\Omega), \hat{\mathcal H}= L^2(\R^d)$, $L = -\Delta_\Omega^{\rm N}$, $\hat L =-\Delta$, and $J\colon L^2(\Omega) \to L^2(\R^d)$ defined as in the proof of the Dirichlet case.

	In the notation of Lemma~\ref{lem: abstract identities} we have for any $\mu \leq \Lambda$
	\begin{align*}
		E(\mu) = \sum_{n: \lambda_n<\mu} \langle \psi_n,\cdot \rangle \psi_n
	\end{align*}
	and, as before, for all $\mu \geq 0$
	\begin{align*}
		\hat E(\mu) = \mathcal{F}^{-1}(\1_{|\,\cdot\,|^2<\mu} \widehat{(\,\cdot\,)})\,.
	\end{align*}
    Assuming for the moment that $\hat E(\Lambda)J(-\Delta_\Omega^{\rm N} -\Lambda)J^*\hat E(\Lambda)$ is trace class, we deduce from Lemma~\ref{lem: abstract identities} \eqref{itm: Abstract Neu} that
	\begin{align*}
		\Tr(-\Delta_\Omega^{\rm N}-\Lambda)_\limminus = -\int (\lambda-\Lambda) d\Tr \hat E(\Lambda)JE(\lambda)J^*\hat E(\Lambda) + \mathcal{R}_<+\mathcal{R}_>\,.
	\end{align*}

    \medskip

    \emph{Step 2.} We compute the first term on the right and, showing that is finite, will justify that $\hat E(\Lambda)J(-\Delta_\Omega^{\rm N} -\Lambda)J^*\hat E(\Lambda)$ is trace class. We note that $\ran(J^*\hat E(\Lambda))\subset H^1(\Omega) = \dom(\sqrt{-\Delta_\Omega^{\rm N}+1})$. Hence, for any complete orthogonal set $\{\phi_j\}_{j\geq 1}\subset L^2(\R^d)$ we have
	\begin{align*}
		\int (\lambda-\Lambda) d\Tr \hat E(\Lambda)JE(\lambda)J^*\hat E(\Lambda)
		&=
		\sum_{j\geq 1} \int (\lambda-\Lambda)d\langle J^* \hat E(\Lambda) \phi_j, E(\lambda)J^*\hat E(\Lambda)\phi_j\rangle\\
		&=
		\sum_{j\geq 1} \bigl(\|\nabla (\hat E(\Lambda) \phi_j)\|_{L^2(\Omega)}^2-\Lambda\|\hat E(\Lambda) \phi_j\|_{L^2(\Omega)}^2\bigr)\,.
	\end{align*}
	Since $\{\hat \phi_j\}_{j\geq 1}$ is an orthonormal basis for $L^2(\R^d)$, the Pythagorean theorem implies
	\begin{align*}
		\sum_{j\geq 1}\|\hat E(\Lambda)\phi_j\|_{L^2(\Omega)}^2 
		&= 
		(2\pi)^{-d}\int_\Omega \sum_{j\geq 1}|\langle \1_{|\cdot|^2<\Lambda}e^{-ix \cdot (\cdot)}, \hat\phi_j\rangle_{L^2(\R^d)}|^2\,dx\\
		&=
		(2\pi)^{-d}\int_\Omega \|\1_{|\cdot|^2<\Lambda}e^{-ix \cdot (\cdot)}\|_{L^2(\R^d)}^2\,dx\\
		&= (2\pi)^{-d} |\Omega| \int_{|\xi|^2<\Lambda}\,d\xi
	\end{align*}
	and
	\begin{align*}
		\sum_{j\geq 1}\|\nabla(\hat E(\Lambda)\phi_j)\|_{L^2(\Omega)}^2 
		&= 
		(2\pi)^{-d}\int_\Omega \sum_{j\geq 1}\sum_{k=1}^d|\langle (\cdot)_k\1_{|\cdot|^2<\Lambda}e^{-ix \cdot (\cdot)}, \hat\phi_j\rangle_{L^2(\R^d)}|^2\,dx\\
		&=
		(2\pi)^{-d}\int_\Omega \bigl\||\cdot| \1_{|\cdot|^2<\Lambda}e^{-ix \cdot (\cdot)}\bigr\|_{L^2(\R^d)}^2\,dx\\
		&= (2\pi)^{-d}|\Omega| \int_{|\xi|^2<\Lambda}|\xi|^2\,d\xi\,.
	\end{align*}
    These two formulas show that the operators $\hat E(\Lambda)J(-\Delta_\Omega^{\rm N})J^*\hat E(\Lambda)$ and $\hat E(\Lambda)JJ^*\hat E(\Lambda)$ are trace class under our assumptions. Moreover, they show
	\begin{align*}
		-\int (\lambda-\Lambda) d\Tr \hat E(\Lambda)JE(\lambda)J^*\hat E(\Lambda)
		&=
		-(2\pi)^{-d}|\Omega|\int_{|\xi|^2<\Lambda}(|\xi|^2-\Lambda)\,d\xi\\
		&= L_{1,d}^{\rm sc}|\Omega|\Lambda^{1+d/2}\,.
	\end{align*}

    \medskip

    \emph{Step 3.} We observe that the Kr\"oger inequality follows from the trivial estimates $\mathcal R_<\geq 0, \mathcal R_>\geq 0$. To do better, we provide a positive lower bound for $\mathcal R_<$.

	For $\lambda \leq \Lambda \leq \Sigma$ and any complete ON set $\{\phi_j\}_{j\geq 1}$we find
	\begin{align*}
		\Tr \hat E(\Lambda)^\perp J E(\lambda)J^* \hat E(\Lambda)^\perp
		&=
		\sum_{j\geq 1}\langle \hat E(\Lambda)^\perp \phi_j,  E(\lambda) \hat E(\Lambda)^\perp\phi_j\rangle_{L^2(\Omega)}\\
		&=
		\sum_{j\geq 1}\sum_{n:\lambda_n<\lambda}|\langle \hat E(\Lambda)^\perp \phi_j,  \psi_n\rangle_{L^2(\Omega)}|^2\\
		&=
		\sum_{j\geq 1}\sum_{n:\lambda_n<\lambda}|\langle  \hat\phi_j,  \1_{|\cdot|^2>\Lambda}\hat \psi_n\rangle_{L^2(\R^d)}|^2\\
		&= \sum_{n:\lambda_n<\lambda}\int_{|\xi|^2>\Lambda}|\hat \psi_n(\xi)|^2\,d\xi\,.
	\end{align*}
	Therefore, in the current case we find
	\begin{align*}
		\mathcal R_< 
		&= 
			\int (\lambda-\Lambda)_\limminus \Tr \hat E(\Lambda)^\perp J E(\lambda)J^* \hat E(\Lambda)^\perp\\
		&= \sum_{n:\lambda_n<\Lambda}(\Lambda-\lambda_n)\int_{|\xi|^2>\Lambda}|\hat \psi_n(\xi)|^2\,d\xi\,.
	\end{align*}
	By Theorem~\ref{thm: uncertainty principle Laplacian} and the normalization $\|\psi_n\|_{L^2(\Omega)}=1$ we have
    $$
    \int_{|\xi|^2>\Lambda}|\hat \psi_n(\xi)|^2\,d\xi \geq c e^{-c' \rho_\theta(\Omega) \sqrt\Lambda} \,.
    $$
    Combining this with Kr\"oger's inequality yields
	\begin{align*}
		\mathcal R_< 
		\geq 
		c e^{-c'\rho_\theta(\Omega)\sqrt{\Lambda}}\Tr(-\Delta_\Omega^{\rm N}-\Lambda)_\limminus
		\geq 
		c L_{1,d}^{\rm sc}|\Omega| \Lambda^{1+d/2}e^{-c'\rho_\theta(\Omega)\sqrt{\Lambda}} \,.
	\end{align*}
	Since $\mathcal{R}_>\geq 0$ we have arrived at the inequality
	\begin{equation*}
		\Tr(-\Delta_\Omega^{\rm N}-\Lambda)_\limminus \geq L_{1,d}^{\rm sc}|\Omega|\Lambda^{1+d/2}(1+c e^{-c'\rho_\theta(\Omega)\sqrt{\Lambda}})\,,
	\end{equation*}
	which completes the proof of Theorem \ref{thm: main Laplace inradius} for $\gamma=1$ in the Neumann case.
\end{proof}

\begin{proof}[Proof of Theorem~\ref{thm: main Landau inradius} for $H_\Omega^{\rm D}$, $\gamma=1$]
	Let $\Omega\subset\R^2$ be an open set of finite measure and let $B>0$. 
    
    \medskip

    \emph{Step 1.} Recall that the finite measure assumption implies that $H_\Omega^{\rm D}$ has discrete spectrum, each of its eigenvalues has finite multiplicity and their only accumulation point is $+\infty$. Let $\{\psi_n\}_{n\geq1}$ be the eigenfunctions of $H_\Omega^{\rm D}$ and $\{\lambda_n\}_{n\geq 1}$ the corresponding eigenvalues. 

	As in the proof for the Dirichlet Laplace operator we aim to apply \eqref{itm: Abstract Dir} of Lemma~\ref{lem: abstract identities}. In this case we choose $\mathcal H=L^2(\Omega), \hat{\mathcal H} = L^2(\R^2), L=H_\Omega^{\rm D}, \hat L = H,$ and $J$ defined as before through extension by zero. Note that in the sense of quadratic forms $H_\Omega^{\rm D} = J^*HJ$.

	We have for any $\mu\geq 0$
	\begin{equation*}
		E(\mu) = \sum_{n:\lambda_n<\mu}\langle \psi_n, \cdot\rangle \psi_n \quad \mbox{and}\quad \hat E(\mu) = \sum_{k: B(2k-1)<\mu} \Pi_k
	\end{equation*}

	By Lemma~\ref{lem: abstract identities} \eqref{itm: Abstract Dir},
	$$
        \Tr (H_\Omega^{\rm D}-\Lambda)_\limminus  = \int (\lambda - \Lambda)_\limminus  \,d\Tr J^* \hat E(\lambda) J - \mathcal R_<' - \mathcal R_>'\,.
    $$

    \medskip
    
    \emph{Step 2.}
    For the first term on the right side we have
    \begin{align*}
     	\int (\lambda - \Lambda)_\limminus  \,d\Tr J^* \hat E(\lambda) J 
     	&= \sum_{k:B(2k-1)<\Lambda}(\Lambda-B(2k-1))\Tr(\1_\Omega \Pi_k\1_\Omega) \\
     	&= \sum_{k:B(2k-1)<\Lambda}(\Lambda-B(2k-1))\int_\Omega \Pi_k(x, x)\,dx\\
     	& = \frac{B}{2\pi}\,|\Omega|\sum_{k:B(2k-1)<\Lambda}(\Lambda-B(2k-1))\,,
    \end{align*} 
    where we used the fact that the integral kernel of $\Pi_k$ satisfies $\Pi_k(x,x)=B/(2\pi)$ for all $x\in\R^2$.

    \emph{Step 3.}
    Combining the formula from Step 1 with the computation in Step 2, we obtain
	\begin{equation}\label{eq: extracting main term}
		\Tr(H_\Omega^{\rm D} - \Lambda)_\limminus  = |\Omega|\, \frac{B}{2\pi} \sum_{k=1}^\infty (B(2k-1)-\Lambda)_\limminus - \mathcal R'_< - \mathcal R'_> \,.
	\end{equation}
	The inequality of Erd\H{o}s--Loss--Vougalter follows by bounding $\mathcal R'_< \geq 0$ and $\mathcal R'_>\geq 0$.

To get an improved inequality, we will still drop the term $\mathcal R_>'$, but we shall provide a positive lower bound for $\mathcal R_<'$ . We write
\begin{align*}
	\mathcal R_<' &= \sum_{n: \lambda_n<\Lambda}\int (\lambda-\Lambda)_\limplus \, d \Tr E(\Lambda)  J^* \hat E(\lambda) J E(\Lambda)\\
	&=
	\sum_{n:\lambda_n<\Lambda} \int (\lambda-\Lambda)_\limplus \, d \langle \psi_n, J^* \hat E(\lambda)J\psi_n\rangle\\
	&= \sum_{n:\lambda_n<\Lambda}\sum_{k\geq 1}(B(2k-1)-\Lambda)_\limplus \|\Pi_k \psi_n\|_{L^2(\R^2)}^2\\
	& = \int_0^\infty \sum_{n:\lambda_n<\Lambda} \sum_{k:B(2k-1)\geq \Lambda+\eta} \|\Pi_k\psi_n\|_{L^2(\R^2)}^2 \,d\eta \,.
\end{align*}
For fixed $\eta>0$ we bound
$$
\sum_{n:\lambda_n<\Lambda} \sum_{k:B(2k-1)\geq \Lambda+\eta} \|\Pi_k\psi_n\|_{L^2(\R^2)}^2
\geq \Tr(H_\Omega^{\rm D}-\Lambda)_\limminus^0 \inf_{n\geq 1} \sum_{k:B(2k-1)\geq \Lambda+\eta} \|\Pi_k\psi_n\|_{L^2(\R^2)}^2 \,.
$$
Now by Theorem~\ref{thm: uncertainty principle Landau} there are constants $c, c'>0$ depending only on $\theta$ such that
$$
\sum_{k:B(2k-1)\geq\Lambda+\eta} \|\Pi_k\psi_n\|_{L^2(\R^2)}^2 \geq c e^{-c'(\rho_\theta(\Omega)\sqrt{\Lambda+\eta} + \rho_\theta(\Omega)^2 B)} \,.
$$
We obtain, as in \cite{FrankLarson_Convex24} and the proof for $-\Delta_\Omega^{\rm D}$,
\begin{equation}\label{eq: R lower bound}
\begin{aligned}
	\mathcal R_<' 
	&\geq 
		c \Tr(H_\Omega^{\rm D}-\Lambda)_\limminus^0 \int_0^\infty e^{-c'(\rho_\theta(\Omega)\sqrt{\Lambda+\eta} + \rho_\theta(\Omega)^2 B)}\,d\eta\\
	&\geq 
		c \Tr(H_\Omega^{\rm D}-\Lambda)_\limminus^0 \int_0^{3\Lambda} e^{-c'(2\rho_\theta(\Omega)\sqrt{\Lambda} + \rho_\theta(\Omega)^2 B)}\,d\eta\\
	&\geq
		3c \Lambda \Tr(H_\Omega^{\rm D}-\Lambda)_\limminus^0 e^{-c'(2\rho_\theta(\Omega)\sqrt{\Lambda} + \rho_\theta(\Omega)^2 B)} \,.
\end{aligned}
\end{equation}

Now we distinguish two cases according to whether $\Tr(H_\Omega^{\rm D}-\Lambda)_\limminus^0$ exceeds 
$$
\frac1{1+3c} \Lambda^{-1} |\Omega| \frac B{2\pi} \sum_{k=1}^\infty (B(2k-1)-\Lambda)_\limminus
$$
or not. Arguing similarly as in the proof of Theorem \ref{thm: main Laplace inradius}, we find that in either case
$$
\Tr (H_\Omega^{\rm D}-\Lambda)_\limminus  \leq |\Omega|\frac{B}{2\pi}\sum_{k=1}^\infty (B(2k-1)-\Lambda)_\limminus \left( 1 - \frac{3c}{1+3c} \, e^{-c'(2\sqrt{\lvert \Omega\rvert\Lambda} + \lvert \Omega\rvert B )} \right).
$$
This proves Theorem \ref{thm: main Landau inradius} for $\gamma=1$ in the Dirichlet case.
\end{proof}

\begin{proof}[Proof of Theorem~\ref{thm: main Landau inradius} for $H_\Omega^{\rm N}, \gamma=1$]
    Let $\Omega\subset\R^2$ be an open set of finite measure and let $B>0$.

    \medskip

    \emph{Step 1.} Let $\Sigma = \inf \sigma_{ess.}(H_\Omega^{\rm N})$. If $\Lambda>\Sigma$ the left-hand side of the inequality in the statement is infinite and the inequality is trivially true. Therefore, we in the rest of the proof assume that $\Lambda \leq \Sigma$. Let $\{\lambda_n\}_{n\geq 1}$ be the set of eigenvalues of $H_\Omega^{\rm N}$ that are less than $\Lambda\leq \Sigma$ and let $\{\psi_n\}_{n\geq 1}$ be an orthonormal set of eigenfunctions corresponding to these eigenvalues.

	We aim to apply \eqref{itm: Abstract Neu} of Lemma~\ref{lem: abstract identities} with $\mathcal H= L^2(\Omega), \hat{\mathcal H}= L^2(\R^2)$, $L = H_\Omega^{\rm N}$, $\hat L =H$, and $J$ defined as in the earlier proofs.

	In the notation of Lemma~\ref{lem: abstract identities} we have for any $\mu \leq \Lambda$
	\begin{align*}
		E(\mu) = \sum_{n: \lambda_n<\mu} \langle \psi_n,\cdot \rangle \psi_n
	\end{align*}
	and for all $\mu \geq 0$
	\begin{align*}
		\hat E(\mu) = \sum_{k: B(2k-1)<\mu} \Pi_k\,.
	\end{align*}
    Assuming for the moment that $\hat E(\Lambda)J(H_\Omega^{\rm N} -\Lambda)J^*\hat E(\Lambda)$ is trace class, we deduce from Lemma~\ref{lem: abstract identities} \eqref{itm: Abstract Neu} that
	\begin{align*}
		\Tr(H_\Omega^{\rm N}-\Lambda)_\limminus = -\int (\lambda-\Lambda) d\Tr \hat E(\Lambda)JE(\lambda)J^*\hat E(\Lambda) + \mathcal{R}_<+\mathcal{R}_>\,.
	\end{align*}

    \medskip

    \emph{Step 2.}
	We compute the first term on the right, in particular, showing that it is finite and thereby also justifying that $\hat E(\Lambda)J(H_\Omega^{\rm N} -\Lambda)J^*\hat E(\Lambda)$ is trace class. We note that
    \begin{align*}
		\hat E(\Lambda)JE(\lambda)J^*\hat E(\Lambda) 
		&=
		\sum_{k,k':B(2k-1)<\Lambda, B(2k'-1)<\Lambda} \Pi_k JE(\lambda)J^*\Pi_{k'} \,,
	\end{align*}
	Let $\{\phi_{j}\}_{j\geq 1}\subset L^2(\R^d)$ be an orthonormal basis of $L^2(\R^2)$ satisfying $\Pi_k \phi_j \in \{0, \phi_j\}$ for each $k,j\geq 1$. Since $\ran J^*\Pi_k\subset H^1_A(\Omega) = \dom(\sqrt{H_\Omega^{\rm N}+1})$ we have
	\begin{align*}
		\int (\lambda-\Lambda) d\Tr \hat E(\Lambda)JE(\lambda)J^*\hat E(\Lambda)
		&=
		\sum_{k:B(2k-1)<\Lambda} \sum_{j\geq 1}\int (\lambda-\Lambda)d\langle J^* \Pi_k\phi_j, E(\lambda)J^*\Pi_k\phi_j\rangle\\
		&=
		\sum_{k:B(2k-1)<\Lambda} \sum_{j\geq 1} \bigl(\|\nabla_A \Pi_k\phi_{j}\|_{L^2(\Omega)}^2-\Lambda\|\Pi_k\phi_{j}\|_{L^2(\Omega)}^2\bigr)\,.
	\end{align*}
	Since $\{\phi_j\}_{j\geq 1}$ is an orthonormal basis for $L^2(\R^2)$ and $\Pi_k(x, x)= B/(2\pi)$ we have
	\begin{align*}
		\sum_{j\geq 1}\|\Pi_k\phi_j\|_{L^2(\Omega)}^2 
		= 
		\Tr(\1_\Omega \Pi_k \1_\Omega)
		= \frac{B}{2\pi}|\Omega|\,.
	\end{align*}
	The argument for the term involving $\nabla_A$ is a bit more complicated. Denote by $\Psi_{k,x}$ the restriction of $\Pi_k(\cdot,x)$ to $\Omega$. We have for any $\psi\in L^2(\R^2)$ and $x \in \Omega$
	$$
		\Pi_k \psi(x) = \int_{\R^2} \Psi_{k,x'}(x)\psi(x')\,dx' = \langle \Psi_{k,\cdot}(x), \psi\rangle_{L^2(\R^2)}
	$$
	and by smoothness of the kernel
	$$
		\nabla_A \Pi_k\psi(x) =\int_{\R^2} \nabla_{A,x}\Psi_{k,x'}(x)\psi(x')\,dx' = \langle \nabla_{A,x}\Psi_{k, \cdot}(x),\psi\rangle_{L^2(\R^2)}
	$$
	Here $\nabla_{A,x}$ denotes the magnetic gradient acting with respect to the $x$ variable.
	The completeness of $\{\phi_j\}_{j\geq 1}$ implies that
	\begin{equation*}
		\sum_{j\geq 1} \|\nabla_A \Pi_k\phi_{j}\|_{L^2(\Omega)}^2 = \int_\Omega\int_{\R^2}|\nabla_{A,x}\Psi_{k,x'}(x)|^2\,dx'dx \,.
	\end{equation*}
	Now, by an identity proved in \cite{Frank_Semigroup} we have for every $x\in\Omega$,
	$$
		\int_{\R^2} |(\nabla_{A,x} \Psi_{k,x'})(x)|^2 \,dx' = \frac{B}{2\pi} \, B(2k-1) \,.
	$$

	Consequently, we have shown that
	\begin{align*}
		-\int (\lambda-\Lambda) d\Tr \hat E(\Lambda)JE(\lambda)J^*\hat E(\Lambda)
		&=
		|\Omega|\frac{B}{2\pi}\sum_{k=1}^\infty(B(2k-1)-\Lambda)_\limminus \,.
	\end{align*}
    We note that the argument that we have just given also shows that the operators $\hat E(\Lambda)JH_\Omega^{\rm N} J^*\hat E(\Lambda)$ and $\hat E(\Lambda)JJ^*\hat E(\Lambda)$ are trace class.

    \medskip

    \emph{Step 3.}     
	The semiclassical inequality obtained in \cite{Frank_Semigroup} thus follows from the formula in Step 1 and the computation in Step 2 via the trivial estimates $\mathcal R_<\geq 0, \mathcal R_>\geq 0$. To do better we yet again provide a positive lower bound for $\mathcal R_<$. 

	For $\lambda \leq \Lambda \leq \Sigma$ and any complete ON set $\{\phi_j\}_{j\geq 1}$ we find
	\begin{align*}
		\Tr \hat E(\Lambda)^\perp J E(\lambda)J^* \hat E(\Lambda)^\perp
		&=
		\sum_{j\geq 1}\langle \hat E(\Lambda)^\perp \phi_j,  E(\lambda) \hat E(\Lambda)^\perp\phi_j\rangle_{L^2(\Omega)}\\
		&=
		\sum_{j\geq 1}\sum_{n:\lambda_n<\lambda}|\langle \hat E(\Lambda)^\perp \phi_j,  \psi_n\rangle_{L^2(\Omega)}|^2\\
		&=
		\sum_{j\geq 1}\sum_{n:\lambda_n<\lambda}\sum_{k: B(2k-1)\geq \Lambda}|\langle  \Pi_k\phi_j,  \psi_n\rangle_{L^2(\Omega)}|^2\\
		&=
		\sum_{j\geq 1}\sum_{n:\lambda_n<\lambda}\sum_{k: B(2k-1)\geq \Lambda}|\langle  \phi_j, \Pi_k \psi_n\rangle_{L^2(\R^2)}|^2\\
		&= \sum_{n:\lambda_n<\lambda}\sum_{k: B(2k-1)\geq \Lambda}\|\Pi_k \psi_n\|_{L^2(\R^2)}^2 \,.
	\end{align*}
	Therefore, we have
	\begin{align*}
		\mathcal R_< 
		&= 
			\int (\lambda-\Lambda)_\limminus \Tr \hat E(\Lambda)^\perp J E(\lambda)J^* \hat E(\Lambda)^\perp\\
		&= \sum_{n:\lambda_n<\Lambda}(\Lambda-\lambda_n)\biggl(\sum_{k: B(2k-1)\geq \Lambda}\|\Pi_k \psi_n\|_{L^2(\R^2)}^2\biggr)\,.
	\end{align*}
	By Theorem~\ref{thm: uncertainty principle Landau} and the semiclassical inequality for $H_\Omega^{\rm N}$ proved in \cite{Frank_Semigroup}
	\begin{align*}
		\mathcal R_< 
		&\geq 
		c e^{-c'(\rho_\theta(\Omega)\sqrt{\Lambda}+\rho_\theta(\Omega)^2B)}\sum_{n:\lambda_n<\Lambda}(\Lambda-\lambda_n)\\
		&=
		c e^{-c'(\rho_\theta(\Omega)\sqrt{\Lambda}+\rho_\theta(\Omega)^2B)}\Tr(H_\Omega^{\rm N}-\Lambda)_\limminus\\
		&\geq 
		c |\Omega| \frac{B}{2\pi} e^{-c'(\rho_\theta(\Omega)\sqrt{\Lambda}+\rho_\theta(\Omega)^2B)}\sum_{k=1}^\infty(B(2k-1)-\Lambda)_\limminus \,.
	\end{align*}
	Since $\mathcal{R}_>\geq 0$ we have arrived at the claimed inequality
	\begin{equation*}
		\Tr(H_\Omega^{\rm N}-\Lambda)_\limminus \geq c |\Omega| \frac{B}{2\pi} \sum_{k=1}^\infty(B(2k-1)-\Lambda)_\limminus\Bigl(1+ce^{-c'(\rho_\theta(\Omega)\sqrt{\Lambda}+\rho_\theta(\Omega)^2B)}\Bigr)\,.
	\end{equation*}
    This proves Theorem \ref{thm: main Landau inradius} for $\gamma=1$ in the Neumann case.
\end{proof}

%%%%%%%%%%%%%%%%%%%%%%%%%%%%%%%%%%%%%%%%

\section{Consequences for Riesz means of higher order}\label{sec:gamma>}

So far, we have proved Theorems \ref{thm: main Laplace inradius} and \ref{thm: main Landau inradius} for $\gamma=1$. In this section we explain how to deduce the result for $\gamma>1$ from this special case. We consider the non-magnetic and magnetic cases simultaneously.

Recall that by the Aizenman--Lieb identity we have for any $\gamma>1$, any $\Lambda\geq 0$ and any selfadjoint operator $L$
\begin{equation}\label{eq: AizenmanLieb}
	\Tr(L-\Lambda)_\limminus^\gamma = \gamma(\gamma-1)\int_0^\Lambda(\Lambda- \lambda)^{\gamma-2}\Tr(L-\lambda)_\limminus \,d\lambda\,.
\end{equation}
Define 
$$
	G^0_\gamma (\Lambda) := L_{\gamma,d}^{\rm sc}\Lambda^{\gamma+d/2} \quad \mbox{and}\quad G^B_\gamma (\Lambda) := \frac{B}{2\pi} \sum_{k=1}^\infty(B(2k-1)-\Lambda)_\limminus^\gamma \quad \mbox{for }B>0\,.
$$
(It is worth noting that $\lim_{B\to 0} G^B_\gamma(\Lambda) = G^0_\gamma(\Lambda)$ when $d=2$.)
For $\gamma>1$ and $B\geq 0$ we have, similarly to \eqref{eq: AizenmanLieb},
$$
G_\gamma^B (\Lambda) = \gamma(\gamma-1) \int_0^\Lambda(\Lambda- \lambda)^{\gamma-2} G^B_1(\lambda)\,d\lambda \,.
$$

Combining these formulas with Theorems \ref{thm: main Laplace inradius} and~\ref{thm: main Landau inradius} we infer that there are constants $c, c'$ depending only on $\theta$ and $d$ such that
\begin{align}
    \label{eq: integrated Riesz bound lap D}
	\Tr(-\Delta^{\rm D}_\Omega-\Lambda)_\limminus^\gamma & \leq |\Omega| G_\gamma^0(\Lambda) - \mathcal R^0 \,, \\
	\label{eq: integrated Riesz bound lap N}
	\Tr(-\Delta^{\rm N}_\Omega-\Lambda)_\limminus^\gamma & \geq |\Omega| G_\gamma^0(\Lambda) + \mathcal R^0 \,,\\
	\label{eq: integrated Riesz bound D}
	\Tr(H^{\rm D}_\Omega-\Lambda)_\limminus^\gamma & \leq |\Omega| G_\gamma^B(\Lambda) - \mathcal R^B \,, \\
	\label{eq: integrated Riesz bound N}
	\Tr(H^{\rm N}_\Omega-\Lambda)_\limminus^\gamma & \geq |\Omega| G_\gamma^B(\Lambda) + \mathcal R^B \,,
\end{align}
where, for $B\geq 0$,
$$
\mathcal R^B := c|\Omega| \gamma(\gamma-1) \int_0^\Lambda(\Lambda- \lambda)^{\gamma-2}G^B_1(\lambda) e^{-c'(\rho_\theta(\Omega)\sqrt{\lambda} + \rho_\theta(\Omega)^2B )} \,d\lambda \,.
$$
Bounding $e^{-c'(\rho_\theta(\Omega)\sqrt{\lambda} + \rho_\theta(\Omega)^2B )} \geq e^{-c'(\rho_\theta(\Omega)\sqrt{\Lambda} + \rho_\theta(\Omega)^2B )}$ in the integrand, we find
$$
\mathcal R^B \geq c\, e^{-c'(\rho_\theta(\Omega)\sqrt{\Lambda} + \rho_\theta(\Omega)^2B )} |\Omega| G^B_\gamma(\Lambda) \,.
$$

Upon inserting this bound into \eqref{eq: integrated Riesz bound lap D}--\eqref{eq: integrated Riesz bound N} we obtain the bounds in Theorems \ref{thm: main Laplace inradius} and \ref{thm: main Landau inradius} for $\gamma>1$. This concludes the proof of these two theorems.

%%%%%%%%%%%%%%%%%%%%%%%%%%%%%%%%%%%%
%%%%%%%%%%%%%%%%%%%%%%%%%%%%%%%%%%%%

\bibliographystyle{amsalpha}

\end{document}